\newtheorem{thm}{Theorem}[section]
\newtheorem{lem}[thm]{Lemma}
\newtheorem{exa}[thm]{Example}
\theoremstyle{definition}
\newcommand{\scr}[1]{\mathscr #1}
\definecolor{wco}{rgb}{0.5,0.2,0.3}
\numberwithin{equation}{section} \theoremstyle{remark}
\newcommand{\ua}{\uparrow}
\title{{\bf Isoperimetric Inequalities for Non-Local Dirichlet Forms}
}
\author{
{\bf   Feng-Yu Wang$^{a), b)}$  and Jian Wang$^{c)}$}\\
\footnotesize{$^{a)}$Center of Applied Mathematics, Tianjin University, Tianjin 300072, China}\\
 \footnotesize{$^{b)}$Department of Mathematics,
Swansea University, Singleton Park, SA2 8PP, United Kingdom}\\
\footnotesize{$^{c)}$ College of Mathematics and Informatics, 
Fujian Normal University, Fuzhou 350007, China}\\
\footnotesize{wangfy@tju.edu.cn,
F.-Y.Wang@swansea.ac.uk,
 jianwang@fjnu.edu.cn}}
\begin{document}
\allowdisplaybreaks
\def\R{\mathbb R}  \def\ff{\frac} \def\ss{\sqrt} \def\B{\mathbf
B}
\def\N{\mathbb N} \def\kk{\kappa} \def\m{{\bf m}}
\def\ee{\varepsilon}
\def\dd{\delta} \def\DD{\Delta} \def\vv{\varepsilon} \def\rr{\rho}
\def\<{\langle} \def\>{\rangle} \def\GG{\Gamma} \def\gg{\gamma}
  \def\nn{\nabla} \def\pp{\partial} \def\E{\scr E}
\def\d{\text{\rm{d}}} \def\bb{\beta} \def\aa{\alpha} \def\D{\scr D}
  \def\si{\sigma} \def\ess{\text{\rm{ess}}}
\def\beg{\begin} \def\beq{\begin{equation}}  \def\F{\scr F}
\def\Ric{\text{\rm{Ric}}} \def\Hess{\text{\rm{Hess}}}
\def\e{\text{\rm{e}}} \def\ua{\underline a} \def\OO{\Omega}  \def\oo{\omega}
 \def\tt{\tilde} \def\Ric{\text{\rm{Ric}}}
\def\cut{\text{\rm{cut}}} \def\P{\mathbb P} \def\ifn{I_n(f^{\bigotimes n})}
\def\C{\scr C}      \def\aaa{\mathbf{r}}     \def\r{r}
\def\gap{\text{\rm{gap}}} \def\prr{\pi_{{\bf m},\varrho}}  \def\r{\mathbf r}
\def\Z{\mathbb Z} \def\vrr{\varrho} \def\ll{\lambda}
\def\L{\scr L}\def\Tt{\tt} \def\TT{\tt}\def\II{\mathbb I}
\def\i{{\rm in}}\def\Sect{{\rm Sect}}  \def\H{\mathbb H}
\def\M{\scr M}\def\Q{\mathbb Q} \def\texto{\text{o}} \def\LL{\Lambda}
\def\Rank{{\rm Rank}} \def\B{\scr B} \def\i{{\rm i}} \def\HR{\hat{\R}^d}
\def\to{\rightarrow}\def\l{\ell}
\def\EE{\scr E}\def\1{{\bf 1}}
\def\A{\scr A}\def\Subset{\subset} \def\Hup{\sup}\def\Ss{\ss}\def\Hetminus{\setminus}
\def\BB{\scr B}\def\H{\scr H}\def\sN{\scr N}

\maketitle

\begin{abstract} Let $(E,\F,\mu)$ be a $\si$-finite measure space. For
a non-negative symmetric measure $J(\d x, \d y):=J(x,y) \,\mu(\d x)\,\mu(\d y)$ on $E\times E,$ consider the quadratic form
$$\E(f,f):= \frac{1}{2}\int_{E\times E}  (f(x)-f(y))^2 \, J(\d x,\d y)$$ in $L^2(\mu)$. We characterize  the relationship between  the isoperimetric  inequality and  the super Poincar\'e inequality  associated with $\E$.  In particular, sharp Orlicz-Sobolev type and Poincar\'e type isoperimetric inequalities are derived for  stable-like Dirichlet forms on $\R^n$, which include the existing fractional isoperimetric inequality as a special example.
     \end{abstract} \noindent
 AMS subject Classification:\     47G20, 47D62.   \\
\noindent
 Keywords: Isoperimetric inequality, non-local Dirichlet form, super Poincar\'{e} inequality,  Orlicz norm.
 \vskip 2cm

\section{Introduction}

For local (i.e.\ differential) quadratic forms, the isoperimetric inequality is a geometric inequality using the surface area of a set to bound its volume, see, for instance  \cite{Con, Ma, Mil} and references therein, for the study of isoperimetric inequalities and  applications to symmetric diffusion processes.  In this case,   the surface area   refers to the possibility for the associated diffusion process to exit the set.

In the non-local case, the associated process is a jump process which exits a set without hitting the boundary,   so it is reasonable to   replace  the surface area of a set $A$ by the jump rate from $A$ to its complementary $A^c$. In this spirit,   the famous Cheeger inequality \cite{Ch70} for  the first eigenvalue was extended in \cite{LS, CW00} to jump processes (see also \cite{Sal} for finite Markov chains).
See \cite{C1, C2, W00a, W00c,  RW, WZ, M08, M09} for the study of more general functional inequalities of symmetric jump processes using isoperimetric constants. These references  only consider large jumps   (i.e. the total jump rate is finite).    In this paper, we aim to investigate isoperimetric inequalities  for non-local forms with infinite jump rates, for which small jumps will paly a key role.

 \

To explain our motivation more clearly, let us start from the following classical isoperimetric inequality   on $\R^n$:
\beq\label{LC}\mu_\pp(\pp A)\ge n \mu(A)^{\ff{n-1}n} \oo_n^{\ff 1 n},\end{equation}
where $A$ is a measurable subset of $\R^n$ with finite volume, $\pp A$ is its boundary, $\oo_n$ is the volume of the $n$-dimensional unit ball,   $\mu$ is the Lebesgue measure and $\mu_\pp$ is the   area measure induced by $\mu$:
$$
\mu_\pp(\pp A):= \limsup_{\vv\downarrow 0} \ff{\mu(\{{\rm dist}(\cdot, A)\le \vv\})-\mu(A)}{\vv}. $$
In particular,  the equality in \eqref{LC} holds for $A$ being a ball.   By the co-area formula, \eqref{LC} is equivalent to the sharp $L^1$-Sobolev inequality (i.e.\ the energy form is of $L^1$ type)
\beq\label{S9} \|f\|_{\ff{n}{n-1}}\le    \ff 1 {n\oo_n^{1/ n}} \int_{\R^n} |\nn f (x)| \,\d x,\quad f\in W^{1,1}(\R^n),  \end{equation} where for any $p\ge 1$,  $\|f\|_p:=\big(\int_{\R^n} |f(x)|^{p}\,\d x\big)^{1 /p}$ and  $W^{1,p}(\R^n)$ is the homogeneous Sobolev space of differentiability $1$ and
integrability $p$.
For $n>2$,
applying \eqref{S9} to $f=|g|^{\ff{2(n-1)}{n-2}}$  and using the Cauchy-Schwarz inequality, we obtain the sharp Sobolev inequality:
$$\|g\|_{\ff {2n}{n-2}}  \le \ff {2(n-1)} {n(n-2)\oo_n^{ 1/ n}} \bigg(\int_{\R^n} |\nn g (x)|^2\, \d x\bigg)^{1/ 2},\quad g\in W^{1,2}(\R^n).$$

These inequalities are also available for the $\aa$-stable Dirichlet form. For any $\aa\in (0,2\land n)$, there exists a universal constant $C>0$ such that the fractional Sobolev inequality
$$\|f\|_{\ff{2n}{n-\aa}}  \le C \left(\int_{\R^n\times \R^n} \ff{(f(x)-f(y))^2}{|x-y|^{n+\aa}}\,\d x\,\d y\right)^{{1}/{2}},\quad f\in C_c^\infty(\R^n)   $$ holds.  By an approximation argument, this inequality can be extended to $f\in  W^{\aa/2,2}(\R^d).$  Here and in what follows, for $p\ge 1$, $W^{\aa/2,p}(\R^d)$ is denoted by the fractional homogeneous Sobolev space, which is the completion of $C_c^\infty(\R^n)$ with respect to $$\left(\int_{\R^n\times \R^n} \ff{|f(x)-f(y)|^p}{|x-y|^{n+p\aa /2}}\,\d x\,\d y\right)^{{1}/{p}}.$$
Correspondingly to \eqref{S9}, \cite[Theorem 1.1]{MS} (or \cite[Theorem 4.1]{FS} with sharp constant) gives the following $L^1$-Sobolev inequality
\beq\label{S9'}\|f\|_{\ff n{n-\aa/2}}  \le C \int_{\R^n\times \R^n} \ff{|f(x)-f(y)|}{|x-y|^{n+\aa/2}}\,\d x\,\d y,\quad f\in W^{\aa/2,1}(\R^n)\end{equation}  for some constant $C>0$.
 The proof of \eqref{S9'} addressed in \cite{MS,FS} relies on the Hardy inequality for fractional Sobolev spaces. 
 We note that the Sobolev embedding theorems involving the spaces
$W_{\alpha/2,p}$ also can be obtained by interpolation techniques and by passing through Besov spaces, see for example \cite{Besov1, Besov2}. For the treatment of fractional Sobolev-type inequalities
we can refer to \cite{Ad, BCLS, Ta, NPV} and the references therein. 

 According to Theorem \ref{T2.0}(1) below, \eqref{S9'} holds   if and only if
\beq\label{S9''} \kk:=\inf_{\mu(A)\in (0,\infty)} \ff 1 {\mu(A)^{\ff{n-\aa/2}n}} \int_{A\times A^c} \ff{\d x \, \d y}{|x-y|^{n+\aa/2}}>0,\end{equation}
and furthermore, $\kk\in [\ff 1 {2C}, \ff{n}{2C(n-\aa/2)}],$ where $C$ is the sharp constant in \eqref{S9'}. Due to this fact, we also call \eqref{S9'} a Sobolev type isoperimetric inequality.

\

In this paper, we aim to establish isoperimetric inequalities for the following non-local form on a $\si$-finite measure space $(E,\F,\mu)$:
\beq\label{Form}  \E(f,f):= \frac{1}{2}\int_{E\times E} (f(x)-f(y))^2 \, J(\d x,\d y),\end{equation}
where $J(\d x, \d y)$ is a non-negative symmetric measure on $E\times E$.

Instead of the fractional Hardy  inequality used in \cite{MS,FS} and the Besov or
interpolation spaces used in \cite{Besov1, Besov2}, 
in our paper we will apply the super Poincar\'e inequality of $\E$, which was introduced by the first author in \cite{W00a}. This inequality can be regarded as a deformation of the Nash-type inequality, but is easier to verify in applications.
The proof here is self-contained.

We also mentioned that isoperimetric inequalities for symmetric diffusions have already been studied in the literature, see \cite{Bus, Ledoux, W00a, BCR1, BCR2} and the references therein. In particular, Ledoux's approach of Buser's inequality was used in \cite{W00a, BCR1} to illustrate the relation of the super-Poincar\'e inequalities with isoperimetry.
A notion of Orlicz hypercontractive semigroups was introduced in \cite{BCR1}, and their relations with various functional inequalities were studied. A measure-Capacity sufficient condition, in the spirit of Maz'ja \cite{Ma}, was established for super-Poincar\'e inequality
inequality in \cite{BCR2}. In the present setting, we are concerned with non-local forms. We will directly derive the equivalence of $L^1$ Orlicz-Sobolev inequality (involving the
$L^1$-norm of the jumping kernel for non-local forms) and  $L^1$-Poincar\'e type inequality, and also characterize the relationship between  the isoperimetric  inequality and  the super Poincar\'e inequality. In particular, one of our general results (see Theorem \ref{T2.1} below) implies the following Orlicz-Sobolev type isoperimetric inequality  \eqref{ON} on $\R^n$.

Following \cite[Section 1.3]{RR}, a function $N:[0,\infty)\to [0,\infty]$
 is called a Young function if it is convex and increasing with $N(0)=0$ and $N(\infty):= \lim_{s\to\infty} N(s)=\infty.$ We consider the following Orlicz norm induced by $N$ (see \cite[Section 3.2]{RR}):
$$\|f\|_{N}:=\inf\bigg\{r>0: \int_{\R^n} N\Big(\ff{|f(x)|} r\Big)\, \d x \le 1\bigg\},$$ where $\inf \emptyset =\infty$ by convention.
Let $L_N(\R^n)=\{f\in \B(\R^n): \|f\|_N<\infty\}.$ It is easy to see from the  convexity  and $N(0)=0$ that
$cN(s)\le N(cs)$ for $c\ge 1$. So, $N(\infty)>0$ is equivalent to $N(\infty)=\infty$, and
\beq\label{*WH} c^{-1}\|f\|_{cN}\le \|f\|_{N}\le \|f\|_{cN},\ \ c\ge 1.\end{equation}   For two Young functions   $N_1$ and $N_2,$  we say that $N_1$ is not dominated by $N_2$ if
$\sup_{s>0}\ff{N_1(s)}{N_2(s)}=\infty,$ where we set $\ff 0 0 =1$,
 $\ff \infty \infty=1$,
$\ff r 0 =\infty$ and $\ff r \infty =0$ for $r>0$. In this case, we write $N_1\npreceq N_2$.

For $\aa\in (0,2)$, let $\H_\aa$ be the class of functions $h: (0,\infty)\to (0,\infty)$ satisfying
\beg{enumerate} \item[(i)] $h(s)$ and $s{h(s)}^{-1}$ are increasing in $s$.
\item[(ii)] For any $s>0$, $$\Phi_h(s):= \int_0^s \d t \int_0^{t^{-1/n}} \ff{r^{\aa-1}}{h(r)}\, \d r<\infty.$$\end{enumerate}
It is easy to see that $\Phi_h$ is continuous, strictly increasing and concave with $\Phi_h(0)=0$.
Thus, $N_h:=\Phi_h^{-1}$ is a Young function.

\beg{thm}\label{T1.1} For any $\aa\in (0,2)$ and $h\in \H_\aa$, there exists a constant $C>0$ such that
\beq\label{ON} \|f\|_{N_h} \le C \int_{\R^n\times \R^n} |f(x)-f(y)|\ff{ h(|x-y|)}{\,\,|x-y|^{n+\aa}}\, \d x\, \d y,\ \ f\in L_{N_h}(\R^n),\end{equation} which  implies
\beq\label{ON'} \inf_{\mu(A)\in (0,\infty)}\bigg( N_h^{-1}(\mu(A)^{-1}) \int_{A\times A^c} \ff{h(|x-y|)}{\,\,|x-y|^{n+\aa}}\,\d x\, \d y\bigg)>0.\end{equation}
Consequently: \beg{enumerate} \item[$(1)$] For any $\aa_1,\aa_2\in (0,2)$, let
$$N^{\land}_{\aa_1,\aa_2}(s):= s^{\ff n {n-\aa_1/2}}\land s^{\ff n{n-\aa_2/2}},\ \ N^{\lor}_{\aa_1,\aa_2}(s):= s^{\ff n {n-\aa_1/2}}\lor s^{\ff n{n-\aa_2/2}},\ \ s\ge 0.$$
Then there exists a constant $C>0$ such that
\beq\label{N1} \|f\|_{N^{\land}_{\aa_1,\aa_2}} \le C \int_{\R^n\times \R^n} \ff{|f(x)-f(y)| }{|x-y|^{n+\aa_1/2}\lor |x-y|^{n+\aa_2/2}} \,\d x \,\d y,\ \ f\in L_{N^{\land}_{\aa_1,\aa_2}}(\R^n),\end{equation}
\beq\label{N2} \|f\|_{N^{\lor}_{\aa_1,\aa_2}} \le C \int_{\R^n\times \R^n} \ff{|f(x)-f(y)| }{|x-y|^{n+\aa_1/2}\land |x-y|^{n+\aa_2/2}} \,\d x \,\d y,\ \ f\in L_{N^{\lor}_{\aa_1,\aa_2}}(\R^n).\end{equation}
These inequalities are sharp in the sense that $\eqref{N1}\ ($resp.  $\eqref{N2} )$ fails if $N^{\land}_{\aa_1,\aa_2}\ ($resp. $N^{\lor}_{\aa_1,\aa_2} ) $ is replaced by a Young function $N\npreceq N^{\land}_{\aa_1,\aa_2} ($resp. $N^{\lor}_{\aa_1,\aa_2} ) $.
\item[$(2)$] For any $\aa\in (0,2)$ and $q, p\in \R$, let $\lambda\ge 2$ large enough such that both $N_{\aa}^{log,q,+}(s):= \big\{s\log^q\left(\lambda+ s\right)\big\}^{\ff n{n-\aa/2}}$ and
$N_{\aa}^{log,p,-}(s):= \big\{s\log^p\left(\lambda+ s^{-1}\right)\big\}^{\ff n{n-\aa/2}}$ are Young functions. Then there exists a constant $C>0$ such that for all $f\in L_{N^{log,q,+}_{\aa}}(\R^n),$
\beq\label{*21} \|f\|_{N_\aa^{log,q,+}}  \le C \int_{\R^n\times \R^n}|f(x)-f(y)| \ff{ \{\log(2+|x-y|^{-1})\}^q}{|x-y|^{n+\aa/2}} \,\d x \,\d y,\end{equation} and for all $f\in L_{N^{log,p,-}_{\aa}}(\R^n),$
\beq\label{*22} \|f\|_{N_\aa^{log,p,-}}  \le C \int_{\R^n\times \R^n}|f(x)-f(y)| \ff{ \{\log(2+|x-y|)\}^p}{|x-y|^{n+\aa/2}} \,\d x \,\d y.\end{equation}
These inequalities are sharp in the sense that $\eqref{*21}$ $($resp. $\eqref{*22})$ fails if $N_\aa^{log,q,+}$  $($resp. $N_\aa^{log,p,-}) $ is replaced by a Young function $N\npreceq N_\aa^{log,q,+} ($resp. $N_\aa^{log,p,-} )$. \end{enumerate}
\end{thm}

According to Theorem \ref{T2.0}(1) below, \eqref{ON} and \eqref{ON'} are equivalent in more general case,
so  an $L^1$ Orlicz-Sobolev inequality of type \eqref{ON} is also called an Orlicz-Sobolev type isoperimetric inequality.
It is easy to see that when $\aa_1=\aa_2=\aa$ and $q=p=0$, the inequalities \eqref{N1}, \eqref{N2}, \eqref{*21} and \eqref{*22} coincide with \eqref{S9'}.
The Orlicz-Sobolev type isoperimetric inequalities \eqref{N1}--\eqref{*22} are equivalent to the corresponding  Poincar\'e type ones,  see Corollary \ref{C4.1} for details.

\ \

In the remainder of the paper, we will work with the form   \eqref{Form} under a general framework. In Section 2, we characterize the  link between the super poincar\'e and  isoperimteric inequalities. In Section 3, we first apply the main result derived in Section 2 to prove Theorem \ref{T1.1}, then make extensions to    the truncated and discrete $\aa$-stable Dirichlet forms.   Finally, by using a perturbation argument, we  derive  isoperimetric inequalities in Section 4   for $\aa$-stable-like Dirichlet forms with finite reference measures.

\section{Super Poincar\'{e} and isoperimetric inequalities: general results}

Let $(E,\F,\mu)$ be a $\si$-finite measure space,
and let $J(\d x,\d y)$ be a non-negative and symmetric measure on $E\times E$.
In this section, we investigate the link between the isoperimetric inequality and the  super Poincar\'e  inequality for the following symmetric quadratic form
\beq\label{DR} \beg{split} &\E(f,g):=\frac{1}{2}\int_{E\times E} (f(x)-f(y))(g(x)-g(y)) J(\d x,\d y),\\
& f,g\in\D(\E):= \big\{f\in L^2(\mu): \E(f,f)<\infty\big\}.\end{split}\end{equation} To ensure that $\E(f,f)$ does not depend on the choice of $\mu$-versions of $f$, we assume that $J(\d x,\d y)= J(x,y)\,\mu(\d x)\,\mu(\d y)$ for some symmetric density $J: E\times E\to [0,\infty)$. Moreover, we assume that $\D(\E)$ is dense in $L^2(\mu)$ so that $(\E,\D(\E))$ is a symmetric Dirichlet form.

According to \cite{W00a}, we say that $(\E, \D(\E))$ satisfies the super Poincar\'e inequality with rate function $\bb: (0,\infty)\to (0,\infty)$, if
\begin{equation}\label{SP1}\|f\|_2^2 \le r \E(f,f)+ \beta(r)\|f\|_1^2,\quad f\in \D(\E), r>0.\end{equation} Here and in what follows, for any $p\in[1,\infty]$, $\|\cdot\|_p$ denotes the $L^p$-norm with respect to $\mu$.  Since $\E(f,f)\ge 0,$  we may and do assume that $\bb$ is decreasing on $(0,\infty)$. See   \cite{W00a,W00b,Wbook} and references within for the super Poincar\'e inequality and applications.

  For a Young function  $N$,   let $\|\cdot\|_{N}$ be the Orlicz norm induced by $N$ and the measure $\mu$, and let  $L_N(\mu)= \{f\in\B(E): \|f\|_{N}<\infty\},$ where $\B(E)$ is the class of measurable functions on $E$.
  The left derivative of Young function $N$, denoted by $N'_-$, always exists and is non-decreasing left continuous on $(0,\infty)$, see e.g.\ \cite[Section 1.3]{RR}. For any non-negative decreasing  function $f$ on $[0,\infty)$, let  $$f^{-1}(r):=\inf\{s>0: f(s)\le r\},\ \ r\ge0,$$  where $\inf\emptyset :=\infty$.
      Similarly, for any non-negative increasing  function $f$ on $[0,\infty)$, let  $$f^{-1}(r):=\inf\{s>0: f(s)\ge r\},\ \ r\ge0.$$

In the following four subsections, we first observe the equivalence of an $L^1$ functional inequality and the corresponding isoperimetric inequality,
then investigate the link between the super Poincar\'e   and  isoperimetric inequalities, and finally extend the main results to the case with killing.

  \subsection{  $L^1$ functional   and isoperimetric inequalities}

  Consider the $L^1$ Orlicz-Sobolev inequality
    \beq\label{LOS} \|f\|_N\le C \int_{E\times E} |f(x)-f(y)|\, J(\d x, \d y),\ \ f\in L_N(\mu),\end{equation} and
  the $L^1$ Poincar\'e type inequality
  \beq\label{PI0} \|f\|_2^2 \le C_1 \int_{E\times E}|f^2(x)- f^2(y)| \, J(\d x,\d y) + C_2 \|f\|_1^2,\ \ f\in L^2(\mu),\end{equation}
  where $N$ is a Young function and $C, C_1, C_2>0$ are constants. The following result provides their equivalent isoperimetric inequalities.

  \beg{thm}\label{T2.0}\begin{itemize}
  \item[$(1)$] The inequality $\eqref{LOS}$ implies
  \beq\label{LOS'} \inf_{\mu(A)\in (0,\infty)} \big\{N^{-1}(\mu(A)^{-1}) J(A\times A^c)\big\}\ge \kk\end{equation} holds for $\kk=\ff 1 {2C}.$
  On the other hand, if $N'_-(s)>0$ for $s>0$ such that
  $$c_N:= \inf_{s>0} \ff{N(s)}{s N'_-(s)} >0,$$ then $\eqref{LOS'}$ implies $\eqref{LOS}$ for $C= \ff 1 {2 c_N\kk}.$

  \item[$(2)$]   The inequality $\eqref{PI0}$   implies
    \beq\label{PI0'} \mu(A)\le 2C_1 J(A\times A^c) +\tt C_2 \mu(A)^2,\ \ \mu(A)\in (0,\infty)\end{equation} for $\tt C_2= C_2$. On the other hand,
  $\eqref{PI0'}$ implies $\eqref{PI0}$ for $C_2= 2\tt C_2.$\end{itemize} \end{thm}

  \beg{proof} (1) For $A\subset E$ with $\mu(A)\in (0,\infty)$, let $f={\bf1}_A$. Then
 \beq\label{AW} \int_{E\times E} |f(x)-f(y)|\, J(\d x,\d y)= 2 J(A\times A^c).\end{equation}
  Moreover, for any $r>0$, by the definition of $N^{-1}$ we see that
  $$\int_E N\big(|f(x)|/r\big)  \,\mu(\d x)= N(r^{-1})\mu(A)\le 1$$ implies $r\ge N^{-1}(\mu(A)^{-1}).$ Therefore,
  $\|f\|_N\ge N^{-1}(\mu(A)^{-1}).$ Combining this with \eqref{LOS} and \eqref{AW}, we prove \eqref{LOS'} for $\kk= \ff 1 {2C}.$

    On the other hand, let $c_N>0$ and \eqref{LOS'} hold. It suffices to prove \eqref{LOS} for $C= \ff 1 {2c_N\kk}$ and any $f\ge 0$ with $\|f\|_N=1.$
    By Fubini's theorem and \eqref{LOS'}, we have
    \beq\label{*AB} \beg{split} &\int_{E\times E}|f(x)-f(y)|\, J(\d x,\d y)= 2 \int_{\{f(x)>f(y)\}} \bigg(\int_{f(y)}^{f(x)} \,\d r \bigg)\,J(\d x,\d y)\\
    &= 2 \int_0^\infty J(\{(x,y): f(x)> r\ge f(y)\}) \,\d r \ge 2 \kk \int_0^\infty \ff{\d r}{N^{-1}(\mu(f> r)^{-1})} \\
    &= 2\kk \int_0^\infty \ff{\mu(N(f)> N(r))}{\mu(N(f)> N(r)) N^{-1}(\mu(N(f)> N(r))^{-1})}\,\d r \\
    &= 2\kk \int_0^\infty \ff{\mu(N(f)> s)}{N'_-(N^{-1}(s)) \mu(N(f)> s)N^{-1}(\mu(N(f)> s)^{-1})}\,\d s.\end{split}\end{equation}
    Since $f\ge 0$  and $\|f\|_N=1$, we have
 $$ \mu(N(f)> s)\le \ff{\mu(N(f))}s =\ff 1 s,\ \ s>0.$$ Noting that $N'_-\circ N^{-1}$ is increasing,  by
 letting $t=N^{-1}(\mu(N(f)> s)^{-1})$ we obtain
 \beg{align*}&N'_-(N^{-1}(s)) \mu(N(f)> s) N^{-1}(\mu(N(f)> s)^{-1}) \\
 &\le N'_-(N^{-1}(\mu(N(f)> s)^{-1}))  \mu(N(f)> s) N^{-1}(\mu(N(f)> s)^{-1})\\
 &= \ff{N'_-(t)t}{N(t)}\le \ff 1 {c_N}.\end{align*} Substituting into \eqref{*AB} and noting that $\|f\|_N=1$, we arrive at
 $$\int_{E\times E} |f(x)-f(y)|\,J(\d x,\d y) \ge 2\kk c_N\mu(N(f))= 2\kk c_N= 2\kk c_N \|f\|_N.$$ Thus, \eqref{LOS} holds for $C= \ff 1 {2\kk c_N}.$

    (2) As in (1), by applying \eqref{PI0} to $f={\bf1}_A$ we prove \eqref{PI0'} for $\tt C_2=C_2$. On the other hand, let $f\in L^2(\mu)$ with $\|f\|_1=1.$ Then
    $\mu(f^2>s)\le s^{-1/2}$, so that, as in \eqref{*AB},   \eqref{PI0'} yields
    \beg{align*} &C_1 \int_{E\times E} |f^2(x)-f^2(y)| \,J(\d x,\d y) = 2 C_1 \int_0^\infty J(\{f^2> s\}\times \{f^2\le s\}) \,\d s \\
    &\ge \int_0^\infty \big\{\mu(f^2> s)- \tt C_2 \mu(f^2> s)^2\big\}\,\d s \ge \mu(f^2)- \tt C_2 \int_0^\infty \ff 1 {\ss s} \mu\big(|f|> \ss s\big)\, \d s\\
    &= \mu(f^2) - 2 \tt C_2 \int_0^\infty \mu\big(|f|>\ss s\big)\,\d\ss s= \mu(f^2)-2\tt C_2 \mu(|f|) = \mu(f^2)- 2\tt C_2 \mu(|f|)^2.\end{align*}
    Therefore, \eqref{PI0'} implies \eqref{PI0} for $C_2=2\tt C_2.$
  \end{proof}

\subsection{From super Poincar\'e to isoperimetric}

Let $P_t$ be the (sub-) Markov semigroup associated with the  symmetric Dirichlet form $(\E,\D(\E))$.

\beg{thm}\label{T2.1} Assume that $\eqref{SP1}$ holds with $\beta(\infty):=\lim_{r\to \infty}\bb(r)=0$. Let $\gg: E\times E\to [0,\infty)$ with $\gg(x,y)>0$ and $\gg(x,y)=\gg(y,x)$ for $x\ne y$,  and define
$$ \theta_\gg(t):= \sup_{\|g\|_\infty\le 1} {\rm ess}_{\mu\times\mu}\sup_{x\ne y} \ff{|P_tg(x)-P_tg(y)|}{\gg(x,y)},\ \ t>0.$$ If
\beg{align*}&\Phi_\gg(s):= \int_0^s\,\d r \int_0^{\bb^{-1}(r)} \theta_\gg(t) \,\d t <\infty,\ \ s>0,\end{align*}  then $N_\gg:=\Phi_\gg^{-1}$ is a Young function, and there exists a constant $C>0$ such that
\beq\label{T21}\|f\|_{N_\gg} \le C \int_{E\times E} |f(x)-f(y)| \gg(x,y)\,J(\d x, \d y),\ \  f\in L_{N_\gg}(\mu). \end{equation} \end{thm}

\ \

To prove this result, we consider the   symmetric measure
$$J_\gg(\d x,\d y):= \gg(x,y)\,J(\d x,\d y) $$ on $E\times E$, and introduce the isoperimetric constants
\begin{equation}\label{IC}\kk_\gg(s):= \inf\bigg\{\ff{J_\gg(A\times A^c)}{\mu(A)}:\ \mu(A)\in (0,s)\bigg\},\ \ s>0,\end{equation} where $\inf \emptyset:=\infty$.
We have the following result.

\begin{lem}\label{L1} For any increasing function $G: [0,\infty)\to [0,\infty)$ with $G(0)=0$ and $G(s)>0$ for $s>0$, it holds that
\begin{equation}\label{S01}\int_E \,\d\mu \int_0^{|f|} \kk_\gg\left( G(s)^{-1}\right) \,\d s\le  \frac{1}{2}\int_{E\times E} |f(x)-f(y)|\, J_\gg(\d x,\d y),\quad \mu(G(|f|))=1.\end{equation}
Consequently:
\begin{itemize}\item[$(1)$] If $\kk_\gg(s)>0$   for some $s>0,$ then
\beq\label{IS1} \|f\|_2^2\le \ff 1 {2\kk_\gg(s)} \int_{E\times E} |f^2(x)-f^2(y)| \,  J_\gg (\d x,\d y)  + \ff 2 s \|f\|_1^2,\quad f\in L^2(\mu).\end{equation}

\item[$(2)$] If $\kk_\gg(s)>0$ for all $s>0$ such that $$\Phi(t):= \int_0^t \ff{\d r}{\kk_\gg(r^{-1})}<\infty, \ \ t> 0,$$  then $N:= \Phi^{-1}$ is a Young function,
and
\beq\label{S03}\|f\|_N \le \frac{1}{2}\int_{E\times E} |f(x)-f(y)| \, J_\gg(\d x,\d y),\ \ f\in L_N(\mu). \end{equation} \end{itemize}
  \end{lem}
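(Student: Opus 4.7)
The proof rests on the classical co-area identity. Since both sides of \eqref{S01} depend only on $|f|$, one may assume $f\ge 0$. Applying Fubini to the layer-cake representation of $|f(x)-f(y)|$ (exactly as in the derivation of \eqref{*AB}) yields
$$\ff12\int_{E\times E}|f(x)-f(y)|\,J_\gg(\d x,\d y) = \int_0^\infty J_\gg(\{f>t\}\times\{f\le t\})\,\d t.$$
For the main inequality \eqref{S01}, the normalization $\mu(G(f))=1$ combined with Markov's inequality applied to $G(f)$ gives $\mu(\{f>t\})=\mu(\{G(f)>G(t)\})\le G(t)^{-1}$, so the definition \eqref{IC} of $\kk_\gg$ yields
$$J_\gg(\{f>t\}\times\{f\le t\})\ge \kk_\gg(G(t)^{-1})\,\mu(\{f>t\}).$$
Integrating in $t$ and exchanging the order of integration via Fubini reproduces the right-hand side of \eqref{S01}.

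For part (1), apply the co-area identity to $f^2$ and invoke the isoperimetric bound only on those levels where $\mu(\{f^2>t\})<s$. The remaining contribution is controlled by Markov: since $\mu(\{f^2>t\})=\mu(\{|f|>\ss t\})\le \|f\|_1/\ss t$, the condition $\mu(\{f^2>t\})\ge s$ forces $t\le \|f\|_1^2/s^2$, and on that interval
$$\int_0^{\|f\|_1^2/s^2}\mu(\{f^2>t\})\,\d t \le \int_0^{\|f\|_1^2/s^2}\ff{\|f\|_1}{\ss t}\,\d t = \ff{2\|f\|_1^2}{s}.$$
Since $\int_0^\infty\mu(\{f^2>t\})\,\d t=\|f\|_2^2$, rearranging produces \eqref{IS1}.

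For part (2), first observe that $\kk_\gg$ is non-increasing on $(0,\infty)$ (the family of admissible sets in \eqref{IC} grows with $s$), so $\Phi'(t)=1/\kk_\gg(t^{-1})$ is non-increasing in $t$; hence $\Phi$ is concave and $N=\Phi^{-1}$ is convex. Together with $N(0)=0$, this makes $N$ a Young function. Differentiating $\Phi(N(s))=s$ yields the key identity $\kk_\gg(N(s)^{-1})=1/\Phi'(N(s))=N'(s)$. Replacing $f$ by $f/\|f\|_N$ so that (WLOG) $\mu(N(|f|))=1=\|f\|_N$ and applying \eqref{S01} with $G=N$ collapses the inner integral to $\int_0^{|f|}N'(s)\,\d s=N(|f|)$; thus the left-hand side becomes $\mu(N(|f|))=\|f\|_N$, and \eqref{S03} follows.

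The only real technical subtlety is the boundary case $\mu(A)=G(t)^{-1}$: the definition \eqref{IC} uses the open interval $\mu(A)\in(0,s)$, so applying the isoperimetric bound at the boundary requires a short $\vv\da 0$ argument exploiting monotonicity of $\kk_\gg$. This is a minor technical point rather than a genuine obstacle; the substantive work is simply the clean combination of co-area and layer-cake with Markov's inequality.
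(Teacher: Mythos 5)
Your proof is correct and follows essentially the same route as the paper's: layer-cake/co-area decomposition of $\tfrac12\int|f-f(y)|\,J_\gg$, Markov's inequality for the tail of $G(|f|)$ (resp.\ $|f|$), the defining property of $\kk_\gg$, and the ODE $N' = \kk_\gg(N^{-1})$ for part~(2); the boundary issue you flag for $\mu(A)=G(t)^{-1}$ is the same one the paper implicitly glosses over. One small slip: $\mu(\{f>t\})=\mu(\{G(f)>G(t)\})$ is false unless $G$ is strictly increasing; what you want (and what suffices) is $\{f>t\}\subseteq\{G(f)\ge G(t)\}$, which still gives $\mu(\{f>t\})\le G(t)^{-1}$ by Markov.
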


\begin{proof}    For any  $f\in\B(E)$ with $\mu(G(|f|))=1$, we have
 $$\mu(|f|>s)\le G(s)^{-1},\quad s>0.$$ As in \eqref{*AB}, this and the definition of $\kk_\gg(s)$ imply that
\beg{align*} &\frac{1}{2} \int_{E\times E}  |f(x)-f(y)|\,  J_\gg(\d x,\d y)\ge \frac{1}{2} \int_{E\times E}  \big||f|(x)-|f|(y)\big|\,  J_\gg(\d x,\d y)\\
&=  \int_0^\infty   J_\gg(\{|f|>u\}\times  \{|f|\le u \} )\,\d u\ge  \int_0^\infty \kk_\gg\left( G(u)^{-1}\right)\mu(|f|> u)\,\d u\\
&= \int_E \,\d\mu \int_0^{|f|} \kk_\gg\left( G(u)^{-1}\right) \,\d u.\end{align*}
We have proved \eqref{S01}.
Below we prove assertions (1) and (2) respectively.

{\bf Assertion  (1)}.  For any $f\in \B(E)$ with $\|f\|_1=1$, we have
\beq\label{UU} \mu(|f|> s)\le \ff 1 s,\quad s>0.\end{equation} Since $\kk_\gg(s)$ is decreasing in $s$,    applying  \eqref{S01} to $f^2$ with $G(s)=s^{1/2}$, we derive
\beq\label{*9}\beg{split}& \frac{1}{2} \int_{E\times E}  |f^2(x)-f^2(y)|\,  J_\gg(\d x,\d y)
\ge \int_E \,\d\mu \int_0^{f^2} \kk_\gg\left( u^{-1/2}\right) \,\d u\\
&= \int_0^\infty \kk_\gg\left( u^{-1/2}\right)\mu(f^2> u)\,\d u
 \ge  \kk_\gg\left(s\right)\int_{s^{-2}}^\infty \mu(f^2>u)\,\d u,\quad s>0.\end{split}  \end{equation}
On the other hand, by \eqref{UU}  we have
$$\int_0^{s^{-2}} \mu(f^2>u)\,\d u \le \int_0^{s^{-2}} \ff 1 {\Ss u}\,\d u= \ff 2 s,\quad s>0.$$ Combining this with \eqref{*9} and  $$\|f\|_2^2 = \int_0^\infty \mu(f^2>u)\,\d u,$$ we prove \eqref{IS1}.

{\bf Assertion (2)}.   Let $N=\Phi^{-1}$. Then $N$ satisfies $N(0)=0$, and solves the equation
\beq\label{GE} \frac{\d N(s)}{\d s}= \kk_\gg\big(N(s)^{-1}\big)\quad \ {\rm a.e.}\ s>0,\end{equation} where $\frac{\d N(s)}{\d s}$ denotes the Radon-Nikodym derivative of $N$ with respect to the Lebesgue measure.
Since $\kk_\gg(s)$ is strictly positive and decreasing in $s$, and since $\Phi(t)<\infty$ for $t>0$, it is easy to deduce from \eqref{GE} that $N$ is a Young function, and
$$\int_0^{|f|} \kk_\gg\big(N(s)^{-1}\big) \,\d s =\int_0^{|f|} \frac{\d N(s)}{\d s}\,\d s= N(|f|).$$ Combining this with  \eqref{S01} leads to
$$1\le \frac{1}{2}\int_{E\times E} |f(x)-f(y)|\,  J_\gg (\d x,\d y),\ \ \mu(N(|f|))=1,$$
which in turn implies \eqref{S03}.
\end{proof}

According to Lemma \ref{L1}, for the proof of Theorem \ref{T2.1}  we only need to estimate the isoperimetric constants $\kk_\gg(s)$ using \eqref{SP1}.
The following result can be regarded as an extension of a
result of \cite{Bus} (see also \cite{Ledoux}) to non-local forms.

\begin{lem} \label{L2} Let $\gg$ and $\theta_\gg(t)$ be in Theorem $\ref{T2.1}. $  If
$$\Theta_\gg(t) := \int_0^t \theta_\gg(s)\,\d s<\infty,\ \ t\ge 0,$$
then the super Poincar\'e inequality \eqref{SP1} implies
\beq\label{PL} \kk_\gg (s)\ge  \frac{1-\e^{-1}}{2\Theta_\gg(\beta^{-1}(1/({2s})))},\quad s>0.\end{equation}
 \end{lem}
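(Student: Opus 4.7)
The plan is to adapt Ledoux's semigroup proof of Buser's inequality to the present non-local framework. Fix a measurable set $A\subset E$ with $\mu(A)\in(0,s)$, put $f=\1_A$, and let $t_1>0$ be a parameter to be chosen. First I would use the symmetry of $(P_\tau)$ and the identity $\ff{\d}{\d\tau}\<f,P_\tau f\>=-\E(f,P_\tau f)$ to derive
$$\mu(A)-\|P_{t_1}f\|_2^2=\int_0^{2t_1}\E(f,P_\tau f)\,\d\tau,$$
observing that $\<f,P_{2t_1}f\>=\<P_{t_1}f,P_{t_1}f\>=\|P_{t_1}f\|_2^2$.

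Next, bounding the bilinear integrand of $\E(f,P_\tau f)$ by its absolute value and using $\|f\|_\infty=1$ together with the definition of $\theta_\gg(\tau)$, which supplies $|P_\tau f(x)-P_\tau f(y)|\le \theta_\gg(\tau)\gg(x,y)$ almost everywhere, I would obtain
$$\E(f,P_\tau f)\le \theta_\gg(\tau)\cdot\ff 12\int_{E\times E}|f(x)-f(y)|\,J_\gg(\d x,\d y)=\theta_\gg(\tau)\,J_\gg(A\times A^c),$$
where the final equality uses $f=\1_A$ and symmetry of $J_\gg$. Integrating in $\tau$ over $[0,2t_1]$ and combining with the previous identity yields $\mu(A)-\|P_{t_1}f\|_2^2\le \Theta_\gg(2t_1)\,J_\gg(A\times A^c)$.

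The third ingredient is an upper bound on $\|P_{t_1}f\|_2^2$ coming from \eqref{SP1}. Applying the super Poincar\'e inequality to $P_\tau f$, and noting $\|P_\tau f\|_1\le \|f\|_1=\mu(A)$, one gets the differential inequality
$$\ff{\d}{\d\tau}\|P_\tau f\|_2^2=-2\E(P_\tau f,P_\tau f)\le -\ff 2 r\|P_\tau f\|_2^2+\ff{2\bb(r)}{r}\mu(A)^2,\qquad r>0,$$
whence Gronwall's lemma gives
$$\|P_{t_1}f\|_2^2\le \e^{-2t_1/r}\mu(A)+(1-\e^{-2t_1/r})\bb(r)\mu(A)^2.$$

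Finally, I would calibrate by setting $r=2t_1=\bb^{-1}(1/(2s))$, so that $\e^{-2t_1/r}=\e^{-1}$ and $\bb(r)\le 1/(2s)$; together with $\mu(A)\le s$ this yields $\|P_{t_1}f\|_2^2\le \mu(A)(1+\e^{-1})/2$, and hence $\mu(A)-\|P_{t_1}f\|_2^2\ge \mu(A)(1-\e^{-1})/2$. Dividing by $\mu(A)$ and taking the infimum over admissible $A$ produces the claimed bound on $\kk_\gg(s)$. The main delicacy is precisely this calibration: the parameters $r$ and $t_1$ must be coupled so that the exponential contraction and the $L^1$-tail term in the Gronwall estimate jointly undercut $\mu(A)$, producing exactly the numerator $(1-\e^{-1})$; any looser coupling loses the factor, and a tighter coupling collides with the constraint $\bb(r)\mu(A)\le 1/2$ uniformly for $\mu(A)<s$.
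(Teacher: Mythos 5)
Your proof is correct and takes essentially the same Ledoux-style semigroup route as the paper: both rest on the identity relating $\mu(A)-\|P_{t_1}\1_A\|_2^2$ to $\int_0^{2t_1}\E(\1_A,P_\tau\1_A)\,\d\tau$, the pointwise bound $|P_\tau\1_A(x)-P_\tau\1_A(y)|\le\theta_\gg(\tau)\gg(x,y)$, the exponential-decay reformulation of \eqref{SP1}, and the calibration $r=2t_1=\bb^{-1}(1/(2s))$. The only cosmetic difference is that the paper first records the general $L^1$ contraction estimate $\mu(|f-P_tf|)\le\tfrac12\Theta_\gg(t)\int|f(x)-f(y)|\,J_\gg(\d x,\d y)$ by duality and then specializes to $f=\1_A$ (discarding the nonnegative $\int_{A^c}P_t\1_A\,\d\mu$), whereas you work with $f=\1_A$ from the outset; the chains of inequalities are otherwise identical.
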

\begin{proof} For any $f,g\in \D(\E)$, we have
\begin{align*}\mu(g(f-P_tf)) &= \mu(f(g-P_tg)) = \int_0^t \E(f,P_s g)\,\d s\\
&= \ff 1 2 \int_0^t\d s \int_{E\times E} (P_s g(x)-P_s g(y)) (f(x)-f(y))\, J (\d x,\d y)\\
&\le\frac{\|g\|_\infty}{2}  \left(\int_0^t\theta_\gg(s)\,\d s\right)
  \int_{E\times  E} |f(y)-f(x)|\, J_\gg (\d x,\d y) \\
&= \frac{\Theta_\gg(t)\|g\|_\infty}{2} \int_{E\times  E} |f(y)-f(x)|\, J_\gg (\d x,\d y).\end{align*}
Thus,
\begin{equation}\label{e:proof-1}\mu(|f-P_tf|)\le \frac{\Theta_\gg(t)}{2} \int_{E\times  E} |f(y)-f(x)|\,  J_\gg(\d x,\d y).\end{equation}

Next, by \cite[(3.4)]{W00a},   the super Poincar\'{e} inequality \eqref{SP1} is equivalent to
$$\|P_tf\|_2^2\le \|f\|_2^2 \exp(-2t/r)+\beta(r)\|f\|_1^2\left(1-\exp(-2t/r)\right), \quad t,r>0.$$
In particular, for any $A\Subset E$ with $\mu(A)<\infty$, we have
\begin{equation}\label{e:posup-1}\|P_{t/2} {\bf 1}_A\|_2^2 \le \mu(A)\exp(-t/r)+ \mu(A)^2 \beta(r)\left(1-\exp(-t/r)\right), \quad t,r>0.\end{equation}

Now,  for $s>0$ and $A\subset E$ with $0<\mu(A)<s$,
 \eqref{e:proof-1}   gives
\beq\label{*0}\mu(|{\bf 1}_A-P_t{\bf 1}_A|)\le \frac{\Theta_\gg(t)}{2} \int_{E\times  E} |{\bf 1}_A(y)-{\bf 1}_A(x)|\,  J_\gg(\d x,\d y)=\Theta_\gg(t)   J_\gg(A\times  A^c).\end{equation}
On the other hand, we have
\begin{equation*}\beg{split}   \mu (|{\bf 1}_A- P_t {\bf 1}_A|)=&\int_A (1-P_t {\bf 1}_A)\,\d \mu+ \int_{A^c}P_t {\bf 1}_A\,\d\mu \\
\ge &\int_A (1-P_t {\bf 1}_A)\,\d \mu  =\mu(A)-\int_A P_t {\bf 1}_A\,\d \mu =\mu(A)- \|P_{t/2} {\bf 1}_A\|_2^2.
\end{split} \end{equation*}
This together with \eqref{e:posup-1} yields that for any $t,r>0$,
\begin{align*}\mu(|{\bf 1}_A-P_t{\bf 1}_A|)\ge & \mu(A)\big(1-\mu(A)\beta(r)\big)\big(1-\exp(-t/r)\big).\end{align*} Taking $r=t=\beta^{-1}(1/(2\mu(A)))$ in the inequality above, we get that
$$\mu(|{\bf 1}_A-P_t{\bf 1}_A|)\ge\frac{1-\e^{-1}}{2} \mu(A).$$
Combining this with \eqref{*0}   we arrive at
$$\frac{  J_\gg (A\times  A^c)}{\mu(A)}\ge  \frac{1-\e^{-1}}{2\Theta_\gg(\beta^{-1}(1/(2\mu(A))))}\ge \frac{1-\e^{-1}}{2\Theta_\gg(\beta^{-1}(1/(2s)))},$$ where in the last inequality we have used the facts that $0<\mu(A)<s$, $\beta$ is decreasing and $\Theta_\gg$ is increasing. Therefore, \eqref{PL} holds.
 \end{proof}

\ \

\beg{proof}[Proof of Theorem $\ref{T2.1}$] Let $\Phi_\gg$ be in Theorem \ref{T2.1}. Since $\bb(s)$ is strictly positive and deceasing on $(0,\infty)$, it is easy to see that $N_\gg:=\Phi_\gg^{-1}$ is a Young function. Since $\bb(\infty)=0$, by Lemma \ref{L2} we have $\kappa_\gg(s)>0$ for all $s>0$, and
\beg{align*} \Phi(t):&= \int_0^t \ff{\d u}{\kk_\gg(u^{-1})} \le \ff 2 {1-e^{-1}} \int_0^t \Theta_\gg\big(\bb^{-1}(u/2)\big)\, \d u\\
&\le \ff 4 {1-\e^{-1}} \int_0^t \Theta_\gg\big(\bb^{-1}(r)\big)\,\d r = \ff 4 {1-\e^{-1}} \Phi_\gg(t),\ \ t\ge 0.\end{align*}
Thus, $N_\gg(s):= \Phi_\gg^{-1}(s)\le \Phi^{-1}(4s/(1-\e^{-1})):=N(4s/(1-\e^{-1})).$ Combining this with \eqref{S03} and \eqref{*WH}, we prove \eqref{T21}.
\end{proof}

\subsection{From isoperimetric to super Poincar\'e }

Let $(\E,\D(\E))$ be given by \eqref{DR}. For a non-negative symmetric function $\gg$ on $E\times E$, let $J_\gg(\d x,\d y):= \gg(x,y)\, J(\d x,\d y)$, and $\kk_\gg(s)$ be the isoperimetric constant defined by \eqref{IC}. For a Young function $N$, we aim to deduce the super Poincar\'e inequality \eqref{SP1} from the Orlicz-Sobolev type isoperimetric inequality
\beq\label{OS} \|f\|_N \le C\int_{E\times E} |f(x)-f(y)|\,J_\gg(\d x,\d y),\ \ f\in L_N(\mu).\end{equation} To this end, we also consider the Poincar\'e type isoperimetric inequality
\beq\label{PI} \|f\|_2^2\le r \int_{E\times E} |f^2(x)-f^2(y)| \,J_\gg(\d x,\d y) + \bb_1(r) \|f\|_1^2,\  \ r>0, f\in L^2(\mu)\end{equation} for some decreasing function $\bb_1: (0,\infty) \to (0,\infty).$

\beg{thm}\label{T4.1} Assume $\eqref{OS}$   for some Young function $N$ such that $s\mapsto s^{-1}N(s)$ is increasing on $(0,\infty)$. Then:
\beg{enumerate} \item[$(1)$] For any $s>0$,
$$\kk_\gg(s)\ge \dfrac 1 {2Cs N^{-1}(s^{-1})}.$$
\item[$(2)$] $\eqref{PI}$ holds with
$$\bb_1(r):= 2 \inf\big\{s>0:\ Cs^{-1}N^{-1}(s)\le r\big\}, \ \ r>0.$$
\item[$(3)$] If the density $J(x,y):= \ff{J(\d x,\d y)}{\mu(\d x)\mu(\d y)} $ and $\gg$ satisfy
 \begin{equation}\label{con}c_\gg:={\rm ess}_\mu\sup_x\int_E \gg(x,y)^2 J(x,y)\,\mu(\d y)<\infty,\end{equation}
    then $\eqref{SP1}$ holds with
$$\bb(r):= 4 \inf\bigg\{s>0:\ s^{-1}N^{-1}(s)\le \ff{\ss r}{2C\sqrt{2c_\gg}}\bigg\}, \ \ r>0.$$
\end{enumerate} \end{thm}

\beg{proof} For any $s>0$ and $A\subset E$ with $\mu(A)\in (0,s)$, take $f= N^{-1}(\mu(A)^{-1})\1_A$. Then $\|f\|_N=1$  and  due to \eqref{OS},
$$1\le 2C J_\gg(A\times A^c) N^{-1}(\mu(A)^{-1}).$$ Therefore,
$$\kk_\gg(s) \ge \ff 1 {2C} \inf_{r\in (0,s)} \ff 1 {r N^{-1}(r^{-1})}.$$
Since $s N^{-1}(s^{-1})$ is increasing in $s>0$, this implies (1).

It is easy to see that (2) follows from (1) and Lemma \ref{L1}(1).  It remains to prove (3). By (1), Lemma \ref{L1}(1), and the Cauchy-Schwarz inequality, we obtain
\beg{align*} \|f\|_2^2&\le C s N^{-1}(s^{-1}) \int_{E\times E } |f^2(x)-f^2(y)| \, J_\gg(\d x,\d y)+ \ff 2 s \|f\|_1^2\\
&\le C s N^{-1}(s^{-1}) \left(\int_{E\times E } (f(x)-f(y))^2 \, J(\d x,\d y)\right)^{1/2} \\
&\qquad\qquad\qquad\quad \times \left(\int_{E\times E } (f(x)+f(y))^2 \gg(x,y)^2\, J(\d x,\d y)\right)^{1/2} + \ff 2 s \|f\|_1^2\\
&\le  2C \sqrt{2c_\gg} s N^{-1}(s^{-1})\sqrt{\E(f,f)\|f\|_2^2}+  \ff 2 s \|f\|_1^2\\
&\le \ff 1 2 \|f\|_2^2+ 4 C^2c_\gg (s N^{-1} (s^{-1}))^2 \E(f,f) + \ff 2 s \|f\|_1^2 ,\ \ s>0,\end{align*} where in the third inequality we have used \eqref{con}. This implies \eqref{SP1} for the desired $\bb$. \end{proof}

Similarly, we have the following result.
\beg{thm}\label{T4.2} Assume that $\eqref{PI}$ holds with $\bb_1(\infty):=\lim_{r\to\infty}\bb_1(r)=0$. Then:
\beg{enumerate} \item[$(1)$] For any $s>0$,
$$ \kk_\gg(s)\ge \ff 1 {4 \bb_1^{-1}(1/(2s))}.$$
\item[$(2)$] If $$\Phi(t):= 4\int_0^t \bb_1^{-1} (r/2) \,\d r<\infty, \ \ t>0,$$ then $\eqref{OS}$ holds with $N:=\Phi^{-1}$.
\item[$(3)$]   \eqref{con} implies $\eqref{SP1}$  with
$$\bb(r):=  2  \bb_1\left(\ss r/(2\sqrt{2 c_\gg})\right), \ \ r>0.$$
\end{enumerate} \end{thm}
\begin{proof} By Theorem \ref{T2.0}(2),   \eqref{PI} implies
$$\mu(A)\le 2rJ_\gg(A\times A^c)+\beta_1(r)\mu(A)^2,\ \ r>0.$$
Thus, $$1\le 2r \frac{J_\gg(A\times A^c)}{\mu(A)}+\beta_1(r)\mu(A)\le 2r \frac{J_\gg(A\times A^c)}{\mu(A)}+\beta_1(r)s,\ \ r>0.$$Taking $r=\beta_1^{-1}((2s)^{-1})$ in the inequality above, we get that
$$\frac{J_\gg(A\times A^c)}{\mu(A)}\ge \ff 1{4\beta_1^{-1}(1/(2s))}.$$ This implies (1).

(2) immediately follows from (1) and Lemma \ref{L1}(2), and  (3) can be proved by the   argument for Theorem \ref{T4.1}(3).
\end{proof}

As a consequence of Theorem  \ref{T4.1}(2) and Theorem \ref{T4.2}(2), we have the following correspondence of \eqref{OS} and \eqref{PI}.

\beg{cor}\label{C4.1} Let $p_1, p_2, p>1$ and $q\in \R$ be constants. Then,
\beg{enumerate}
 \item[$(1)$] $\eqref{OS}$ holds with $N(s)= s^{p_1}\land s^{p_2} $   if and only if $\eqref{PI}$ holds with
$$\bb_1(r):= c (r^{-\ff {p_1}{ p_1-1}} \lor r^{-\ff {p_2}{p_2-1}}),\ \ r>0 $$ for some constant $c>0$.
\item[$(2)$] $\eqref{OS}$ holds with $N(s)= s^{p_1}\lor s^{p_2} $   if and only if $\eqref{PI}$ holds with
$$\bb_1(r):= c (r^{-\ff {p_1}{ p_1-1}} \land  r^{-\ff {p_2}{p_2-1}}),\ \ r>0 $$ for some constant $c>0$.
\item[$(3)$] Let $\lambda\ge2$ such that $N(s):= s^p\{\log(\ll+s^{-1})\}^q$  is Young function  and $s\mapsto s^{-1}N(s)$ is increasing on $(0,\infty)$. Then, $\eqref{OS}$ holds with $N(s)$ if and only if $\eqref{PI}$ holds with
$$\bb_1(r):= c r^{-\ff p{ p-1}} \{\log(2+r)\}^{-\ff q {p-1}},\ \ r>0 $$ for some constant $c>0$.

\item[$(4)$] Let $\ll\ge2$ such that $N(s):= s^p\{\log(\ll+s)\}^q$  is Young function  and $s\mapsto s^{-1}N(s)$ is increasing on $(0,\infty)$. Then, $\eqref{OS}$ holds with $N(s)$ if and only if $\eqref{PI}$ holds with
$$\bb_1(r):= c r^{-\ff p{ p-1}} \{\log(2+r^{-1})\}^{-\ff q {p-1}},\ \ r>0 $$ for some constant $c>0$.
\end{enumerate}
\end{cor}

\subsection{Extension to the case with killing}

We will add a potential term to the Dirichlet form $(\E,\D(\E))$ given in \eqref{DR}. Let $V$ be a non-negative measurable function on $(E,\F)$ such that the class
$$\D(\E_V):= \bigg\{f\in L^2(\mu): \E_V(f,f):= \E(f,f)+ \int_E f^2\,V(\d x)<\infty\bigg\}$$ is dense in $L^2(\mu)$, where $V(\d x):= V(x)\,\mu(\d x).$ Then $(\E_V, \D(\E_V))$ is a Shr\"odinger type symmetric energy form in $L^2(\mu)$, where
\begin{equation}\label{nondi}\E_V(f,g):= \E(f,g) +\int_E  f(x)g(x)\, V(\d x),\ \ f,g\in \D(\E_V). \end{equation}

It is standard that by enlarging the state space we are able to reduce to present setting to the case without killing, see \cite{LS,CW00}. More precisely, let  $ \bar E := E\cup \{\DD\}$ for an additional state $\DD$, and define \beg{align*} &\bar\mu(\d x)= \1_E(x)\,\mu(\d x)+ \dd_\DD(\d x),\\
 &\bar J(\d x,\d y)=
 \1_{E\times E} J(\d x, \d y)+\1_{\{\DD\}\times E}(x,y) \,\dd_\DD(\d x)\,V(\d y)+ \1_{E\times\{\DD\}}(x,y)\,V(\d x)\,\dd_\DD(\d y),\end{align*}
 where $\dd_\DD$ is the Dirac measure at point $\DD$.
 Since $J(\d x,\d y)= J(x,y)\,\mu(\d x)\,\mu(\d y)$, we have
 $$\bar J(\d x,\d y)= \bar J(x,y)\,\bar\mu(\d x)\,\bar \mu(\d y),$$ where
 $$\bar J(x,y):= \1_{E\times E} (x,y) J(x,y)+ \1_{\{\DD\}\times E}(x,y)V(y)+ \1_{E\times \{\DD\}}(x,y)V(x).$$

 Next, for a non-negative symmetric function $\gg$ on $E\times E$ and a non-negative function $\xi$ on $E$,  let
\beg{align*} &\bar\gg(x,y)= \1_{E\times E} (x,y)\gg(x,y) + \1_{\{\DD\}\times E} (x,y) \xi(y)+ \1_{E\times \{\DD\}}(x,y)\xi(x).\end{align*}
 Then  for any $x\in E$,
$$\int_{\bar E} \bar\gg(x,y)^2 \bar J(x,y)\,\bar\mu(\d y)= \int_{ E} \gg(x,y)^2 J(x,y)\,\mu(\d y) +\xi(x)^2V(x).$$

 Finally, for any measurable function $f$ on $E$, we extend it into $\bar f$ defined on $\bar E$ and by letting $\bar f(\DD)=0$. Then
 \beq\label{GF} \E_V(f,g) = \bar \E(\bar f,\bar g):=\ff 1 2 \int_{\bar E\times \bar E} (\bar f(x)-\bar f(y))(\bar g(x)-\bar g(y))\,\bar J(\d x,\d y),\ \ f,g\in \D(\E_V).\end{equation}
Let $P_t^V$ be the (sub)-Markov semigroup on $L^2(\mu)$ associated to $(\E_V,\D(\E_V))$, while $\bar P_t$ is the corresponding semigroup on $L^2(\bar\mu)$.  We have
\beg{align*} \bar\theta(t)&:= \sup_{\|\bar g\|_\infty\le 1} {\rm ess}_{\bar\mu\times\bar\mu}\sup_{x\ne y} \ff{|\bar P_t\bar g(x)-\bar P_t\bar g(y)|}{\bar \gg(x,y)}\\
&= \sup_{\| g\|_\infty\le 1} {\rm ess}_{ \mu\times \mu}\sup_{x\ne y} \max\bigg\{\ff{| P_t  g(x)- P_t  g(y)|}{ \gg(x,y)},\ \ff{|P_t g(x)|}{\xi(x)}\bigg\} ,\ \ t>0.\end{align*}

With the aid of all the notations above, by applying Theorem \ref{T2.1} and Theorem \ref{T4.1} to $\bar \E$ and $\bar\mu$ we obtain the following result.

\beg{thm}\label{T4.3}   Suppose that the super Poincar\'e inequality $\eqref{SP1}$ holds for $(\E_V,\D(\E_V))$ replacing $(\E,\D(\E))$ with some decreasing function $\bb:(0,\infty)\to (0,\infty)$ satisfying that $\beta(\infty)=0$. If
$$\bar \Phi_\gg(s):= \int_0^s\,\d r \int_0^{\bb^{-1}(r)} \bar \theta(t)\, \d t <\infty,\ \ s>0,$$ then $\bar N_\gg:=\bar\Phi_\gg^{-1}$ is a Young function, and there exists a constant $C>0$ such that
\beq\label{NB} \|f\|_{\bar N_\gg} \le C \bigg(\int_{E\times E} |f(x)-f(y)|\gg(x,y)\, J(\d x,\d y)+ \int_E |f(x)| \xi(x)\, V(\d x)\bigg) \end{equation} holds for all $f\in L_{\bar N_\gg } (\mu).$

 On the other hand, suppose that
 $$\sup_{x\in E}\bigg( \int_{ E} \gg(x,y)^2 J(x,y)\,\mu(\d y)+\xi(x)^2V(x)\bigg)<\infty.$$  If $\eqref{NB}$ holds for some Young function $N$ replacing $\bar N_\gg$ and satisfying that $s\mapsto s^{-1}N(s)$ is increasing on $(0,\infty)$, then there exist   constants $c_1,c_2>0$ such that $\eqref{SP1}$ holds for $(\E_V,\D(\E_V))$ replacing $(\E,\D(\E))$ with
$$\bb(r):= c_1 \inf\Big\{s>0:\ s^{-1}N^{-1}(s)\le c_2\ss r\Big\}, \ \ r>0.$$
\end{thm}

\section{Proof of Theorem \ref{T1.1} and   extensions}

\subsection{  Proof of Theorem \ref{T1.1}}
By Theorem \ref{T2.0}(1), it suffices to prove \eqref{ON} and assertions (1) and (2).
Let $E=\R^n$ and $\mu(\d x)$ be the Lebesgue measure. Consider the symmetric $\aa$-stable process on $\R^n$ with jumping kernel
$$J(x,y):= \1_{\{x\ne y\}} |x-y|^{-(n+\aa)},\ \ x,y\in \R^n.$$
Let $P_t$ be the Markov semigroup generated by the Dirichlet form
$$\E(f,g):= \ff  12 \int_{\R^n\times\R^n} (f(x)-f(y))(g(x)-g(y))J(x,y)\,\d x\,\d y,\ \ f,g\in  \D(\E).$$  It is well known that for some constant $c_1\ge1$, we have the heat kernel upper bound  (see for instance \cite[Theorem 3.2]{CK})
\beq\label{UP1} \|P_t\|_{L^1(\mu)\to L^\infty(\mu)} \le \ff {c_1} {t^{n/\aa}},\ \ t>0,\end{equation} as well as the gradient estimate (see for instance \cite[Theorem 1.3 and Example 1.4]{SSW})
\beq\label{UP2} \|\nn P_tf\|_\infty:=\sup_{x\in \R^n}\limsup_{y\to x}\frac{|P_tf(y)-P_tf(x)|}{|y-x|}\le \ff {c_1} {t^{1/\aa}} \|f\|_\infty,\ \ f\in L^\infty(\mu), t>0.\end{equation}
The heat kernel upper bound  \eqref{UP1} is equivalent to the Sobolev/Nash inequality with dimension $2n/\aa$ (see \cite{Davies} or \cite[Section 3]{CK}), or the super Poincar\'e inequality \eqref{SP1} with
\beq\label{BW}\bb(r)= c_2 r^{-n/\aa},\ \ r>0\end{equation} for some constant $c_2\ge1$, see \cite{W00a, W00b} or \cite{Wbook}.

Now, for any $h$ satisfying conditions (i) and (ii),  the gradient estimate \eqref{UP2} yields
\beq\label{BW2}\beg{split}\theta(t)&:= \sup_{\|g\|_\infty\le 1} {\rm ess}_{\mu\times\mu}\sup_{x\ne y} \ff{|P_t g(x)-P_t g(y)|}{h(|x-y|)}
\le \sup_{s>0} \ff {2c_1} {h(s)} \Big(1\land \ff s{t^{1/\aa}}\Big)\\
 &= 2c_1 \sup_{s>0}\Big( \ff 1 {h(s)}\land \ff{s}{h(s) t^{1/\aa}}\Big)=  \ff{2c_1}{h(t^{1/\aa})},\ \ t>0,\end{split}\end{equation}
 where the last step follows from the fact that $h(s)^{-1}$ is decreasing while $s h(s)^{-1}$ is increasing so that the sup is reached at
 $s= t^{1/\aa}$ which solves $\ff 1 {h(s)}= \ff{s}{h(s) t^{1/\aa}}.$

 Finally, let $\gg(x,y)= h(|x-y|).$ By \eqref{BW} and \eqref{BW2}, we have
 $$\Phi_\gg(s):= \int_0^s \,\d r \int_0^{\bb^{-1}(r)} \theta(t)\,\d t\le 2c_1 \int_0^s\,\d r \int_0^{(r/c_2)^{-\aa/n}} \ff{\d t}{h(t^{1/\aa})}\le c_3 \Phi_h(c_4s),\ \ s\ge 0$$ for some constants $c_3, c_4\ge 1$. Therefore, by Theorem \ref{T2.1} and the property \eqref{*WH}, we prove \eqref{ON}  for some constant $C>0.$

 Below we verify \eqref{N1}--\eqref{*22} and their sharpness respectively.

\ \newline
 (a)\textbf{ For \eqref{N1}.} Let $\aa=1$ and $h(s)= s^{1-\aa_1/2}\land s^{1-\aa_2/2}$ for $s\ge 0.$ Then
 $$\Phi_h(s):= \int_0^s \,\d t \int_0^{t^{-1/n}} \ff{\d r}{h(r)} \le c_5\big(s^{\ff{n-\aa_1/2}n}\lor s^{\ff{n-\aa_2/2}n}\big),\ \ s\ge 0$$
 holds for some constant $c_5>0$. So,
 $$N_h:= \Phi_h^{-1} \ge c_6 N^\land_{\aa_1,\aa_2}$$ holds for some constant $c_6>0$. Therefore, \eqref{N1} follows from \eqref{ON}.

To verify the sharpness of \eqref{N1}, let $N$ be a Young function  such that $N\npreceq N_{\aa_1,\aa_2}^\land$. We have
\beq\label{N11} \Big( \limsup_{s\downarrow 0}+\limsup_{s\uparrow \infty}\Big)  \frac{N(s) }{N_{\aa_1,\aa_2}^\land(s)}=\infty.\end{equation}  Let
 $$f_{s}(x):= (s-|x|)^+,\ \ s>0, x\in\R^n.$$ Then
$$|f_s(x)-f_s(y)|\le (s\land |x-y|)\big\{\1_{B(0,s)}(x) +\1_{B(0,s)}(y)\big\}.$$
Thus,  there exist constants $c_7,c_8>0$ such that
\beq\label{N13} \beg{split}&\int_{\R^n\times\R^n} \ff{|f_{s}(x)-f_{s}(y)| }{|x-y|^{n+\aa_1/2}\lor |x-y|^{n+\aa_2/2}} \,\d x\,\d y\\
&\le 2 \int_{B(0,s)}\d x\int_{\R^n} \ff{s\land |x-y| }{|x-y|^{n+\aa_1/2}\lor |x-y|^{n+\aa_2/2}}  \,\d y\\
&\le c_7 \int_{B(0,s)} (s^{1-\aa_1/2}\land s^{1-\aa_2/2})\,\d x\le
  c_8 (s^{n+1-\aa_1/2}\land s^{n+1-\aa_2/2}),\ \ s\ge 0.\end{split}\end{equation} If \eqref{N1} holds for $N$ replacing $N_{\aa_1,\aa_2}^\land$, then
$$ \|f_{s}\|_N\le c_0 (s^{n+1-\aa_1/2}\land s^{n+1-\aa_2/2}),\ \ s>0$$ holds for some constant $c_0>0.$  Therefore, there exist constants $c_9,c_{10}>0$ such that
\beq\label{NM}\beg{split} 1&\ge \int_{\R^n} N\Big(\big\{c_0 (s^{n+1-\aa_1/2}\land s^{n+1-\aa_2/2})\big\}^{-1}f_{s}(x)\Big)\, \d x\\
&\ge \int_{\{s/4\le |x|\le 3s/4\}} N\big(c_9 (s^{\aa_1/2-n}\lor s^{\aa_2/2-n})\big)\,\d x\\
&\ge c_{10} s^n N\big(c_9 (s^{\aa_1/2-n}\lor s^{\aa_2/2-n})\big),\ \ s>0.\end{split}\end{equation}
Noting that
$$\inf_{s>0}  s^n N_{\aa_1,\aa_2}^\land \big(c_9(s^{\aa_1/2-n}\lor s^{\aa_2/2-n})\big)>0,$$
from  \eqref{N11} and \eqref{NM} we conclude   that
$$1\ge \Big(\limsup_{s\downarrow 0} +\limsup_{s\uparrow \infty}\Big) c_{10} s^n N\big(c_9 (s^{\aa_1/2-n}\lor s^{\aa_2/2-n})\big)=\infty,$$ which is impossible.

\ \newline
(b) \textbf{For \eqref{N2}.}  Let $\aa=1$ and $h(s)= s^{1-\aa_1/2}\lor s^{1-\aa_2/2}$ for $s\ge 0.$ Then
 $$\Phi_h(s):= \int_0^s\, \d t \int_0^{t^{-1/n}} \ff{\d r}{h(r)} \le c_5'\big(s^{\ff{n-\aa_1/2}n}\land s^{\ff{n-\aa_2/2}n}\big),\ \ s\ge 0$$
 holds for some constant $c_5'>0$. So,
 $$N_h:= \Phi_h^{-1} \ge c_6' N^\lor_{\aa_1,\aa_2}$$ holds for some constant $c_6'>0$. Therefore, \eqref{N2} follows from \eqref{ON}.

As in (a), we have
$$\int_{\R^n\times\R^n} \ff{|f_{s}(x)-f_{s}(y)| }{|x-y|^{n+\aa_1/2}\land |x-y|^{n+\aa_2/2}} \d x\d y
\le c_7' (s^{n+1-\aa_1/2}\lor s^{n+1-\aa_2/2}),\ \ s>0.$$ for some constant $c_7'>0$. Moreover,
for any Young function $N$ with $N\npreceq N_{\aa_1,\aa_2}^\land$, and for any constant $c_0>0$, we have
$$\Big(\limsup_{s\downarrow 0} +  \limsup_{s\uparrow\infty} \Big) \int_{\R^n} N\Big(\big\{c_0 (s^{n+1-\aa_1/2}\lor s^{n+1-\aa_2/2})\big\}^{-1}f_{s}(x)\Big)  \d x=\infty,$$
so that \eqref{N2} does not hold for $N$ replacing $N_{\aa_1,\aa_2}^\lor$.

\ \newline
(c) \textbf{For \eqref{*21}.} Let $\lambda_0\ge 2$ such that
$$h(s):= s^{\aa/2} \{\log(\ll_0+s^{-1})\}^q,\ \ s\ge 0$$ satisfies condition (i).  Then there exists a constant $c_{11}>0$ such that
$$\Phi_h(s):= \int_0^s \d t \int_0^{t^{-\ff 1 n}} \ff{r^{(\aa/2)-1}}{\{\log(\ll_0+r^{-1})\}^q}\,\d r \le \ff{c_{11} s^{\ff{n-\aa/2}n}}{\{\log(2+s)\}^q},\ \ s\ge 0.$$ Thus,
$$N_h:= \Phi_h^{-1} \ge c_{12} N_\aa^{log,q,+}$$ holds for some constant $c_{12}>0$. Therefore, \eqref{*21} follows from \eqref{ON}. The sharpness can be verified with reference functions $f_s$ as above.

\ \newline
(d) \textbf{For \eqref{*22}.} We take
$$h(s):= s^{\aa/2} \{\log(\ll_0+s)\}^p,\ \ s\ge 0$$ for some $\ll_0\ge 2$ large enough such that condition (i) is satisfied. Then there is a constant $c'_{11}>0$ such that
$$\Phi_h(s):= \int_0^s \d t \int_0^{t^{-\ff 1 n}} \ff{r^{(\aa/2)-1}}{\{\log(\ll_0+r)\}^p}\,\d r \le \ff{c'_{11} s^{\ff{n-\aa/2}n}}{\{\log(2+s^{-1})\}^p},\ \ s\ge 0.$$ Hence,
$$N_h:= \Phi_h^{-1} \ge c'_{12} N_\aa^{log,p,-}$$ holds for some constant $c'_{12}>0$. Therefore, from \eqref{ON} we can get \eqref{*22}. Similar to (c), one can verify the sharpness of \eqref{*22} by using reference functions $f_s$ as above.

\subsection{Extension to the truncated $\aa$-stable form}

\beg{thm}\label{T1.2} Let $n\ge 2$, $\aa\in (0,2)$, and let $h: (0,\infty)\to (0,\infty)$ satisfy  condition {\rm (i)} in Theorem $\ref{T1.1}$ and
\beg{enumerate}\item[{\rm (ii')}] $$\tt \Phi_h(s):= \int_0^s \,\d t \int_0^{t^{-\ff\aa n} \lor t^{-\ff 2 n}} \ff{\d r}{h(r^{\ff 1 \aa}\land r^{\ff 1 2})} <\infty,\ \ s\ge 0.$$\end{enumerate}
Let $\tt N_h= \tt \Phi_h^{-1}$. Then there exists a constant $C>0$ such that
\beq\label{ON2} \|f\|_{\tt N_h} \le C \int_{{\{|x-y|\le 1\}}} |f(x)-f(y)|\ff{ h(|x-y|)}{\,\,|x-y|^{n+\aa}} \,\d x \,\d y,\ \ f\in L_{\tt N_h}(\R^n).\end{equation}
Consequently, for $\tt N_{\aa}(s):= s^{\ff{n}{n-\aa/2}}\land s^{\ff n {n-1}},$ there exists a constant $C>0$ such that
\beq\label{N3} \|f\|_{\tt N_\aa} \le C  \int_{{\{|x-y|\le 1\}}} \ff{ |f(x)-f(y)| }{|x-y|^{n+\aa/2}}\, \d x \,\d y,\ \ f\in L_{\tt N_\aa}(\R^n).\end{equation}
This inequality fails if $\tt N_\aa$ is replaced by a Young function    $N\npreceq\tt N_\aa$.
\end{thm}

\beg{proof} Consider the following truncated $\aa$-stable Dirichlet form
$$\E(f,g):= \ff  12 \int_{\{|x-y|\le1\}} (f(x)-f(y))(g(x)-g(y)) \frac{1}{|x-y|^{d+\aa}}\,\d x\,\d y,\ \ f,g\in  \D(\E).$$
Let $P_t$ be the associated Markov semigroup. Then, by \cite[Proposition 2.2]{CKK} and \cite[Theorem 1.3 and Example 1.5]{SSW}, we have
\beq\label{UPt1} \|P_t\|_{L^1(\mu)\to L^\infty(\mu)} \le \ff {c_1} {t^{n/\aa}\wedge t^{n/2}},\ \ t>0,\end{equation} and
\beq\label{UPt2} \|\nn P_tf\|_\infty\le \ff {c_1} {t^{1/\aa}\wedge t^{1/2}} \|f\|_\infty,\ \ f\in L^\infty(\mu), t>0\end{equation} for some constant $c_1\ge1$.
By \eqref{UPt1} and \cite[Theorem 4.5]{W00b}, the super Poincar\'e inequality \eqref{SP1} holds with
\beq\label{BWt}\bb(r)= c_2 (r^{-n/\aa}\vee r^{-n/2}),\ \ r>0\end{equation} for some constant $c_2\ge1$.
On the other hand, by \eqref{UPt2} and the argument of \eqref{BW2},
for any $h$ satisfying condition (i),
\beq\label{BWt2}\beg{split}\theta(t):&= \sup_{\|g\|_\infty\le 1} {\rm ess}_{\mu\times\mu}\sup_{x\ne y} \ff{|P_t g(x)-P_t g(y)|}{h(|x-y|)}\\
&\le \sup_{s>0} \ff {2c_1} {h(s)} \Big(1\land \ff s{t^{1/\aa}\wedge t^{1/2}}\Big)=\ff{2c_1}{h(t^{1/\aa}\wedge t^{1/2})},\ \ t>0.\end{split}\end{equation}
Thus, let $\gg(x,y)= h(|x-y|).$ By \eqref{BWt} and \eqref{BWt2}, for $s>0$,
 $$\Phi_\gg(s):= \int_0^s \,\d r \int_0^{\bb^{-1}(r)} \theta(t)\,\d t\le 2c_1 \int_0^s\,\d r \int_0^{(r/c_2)^{-\aa/n}\vee (r/c_2)^{-2/n}} \ff{\d t}{h(t^{1/\aa}\wedge t^{1/2})}\le c_3 \tilde \Phi_h(c_4s)$$ holds with some constants $c_3, c_4\ge 1$. Therefore, by (ii') and Theorem \ref{T2.1} we prove \eqref{ON2}  for some constant $C>0.$

 Below we verify \eqref{N3} and its sharpness. Let $h(s)= s^{\aa/2}\lor s$ for $s\ge 0.$ Then
 $$\tilde\Phi_h(s):=\int_0^s\,\d r \int_0^{r^{-\aa/n}\vee r^{-2/n}} \ff{\d t}{h(t^{1/\aa}\wedge t^{1/2})} \le c_5\big(s^{\ff{n-\aa/2}n}\lor s^{\ff{n-1}n}\big),\ \ s\ge 0$$
 holds for some constant $c_5>0$, where in the inequality we have used the fact that $n\ge2$. So,
 $$\tilde N_h:= \tilde \Phi_h^{-1} \ge c_6 \tilde N_{\aa}$$ holds for some constant $c_6>0$. Therefore, \eqref{N3} follows from \eqref{ON}.

Let
$f_s$ be the function in the argument of Theorem \ref{T1.1}. Then there exists a constant $c_7>0$ such that
\begin{equation*} \beg{split}\int_{\{|x-y|\le 1\}} \ff{|f_{s}(x)-f_{s}(y)| }{|x-y|^{n+\aa}} \,\d x\,\d y
&\le 2 \int_{B(0,s)}\d x\int_{\{|y-x|\le 1\}} \ff{s\land |x-y| }{|x-y|^{n+\aa/2}}  \,\d y\\
& \le  c_7 (s^{n+1-\aa/2}\land s^{n}),\ \ s\ge 0.\end{split}\end{equation*} Let $N$ be a Young function  such that $N\npreceq \tilde N_{\aa}$. We have
$$ \Big( \limsup_{s\downarrow 0}+\limsup_{s\uparrow \infty}\Big)  \frac{N(s) }{\tilde N_{\aa}(s)}=\infty.$$ Suppose that \eqref{N3} holds for $N$. Then
$$ \|f_{s}\|_N\le c_0 (s^{n+1-\aa/2}\land s^{n}),\ \ s>0$$ holds for some constant $c_0>0,$  so that
\begin{equation*}\beg{split} 1&\ge \int_{\R^n} N\Big(\big\{c_0 (s^{n+1-\aa/2}\land s^{n})\big\}^{-1}f_{s}(x)\Big)\, \d x\\
&\ge \int_{\{s/4\le |x|\le 3s/4\}} N\big(c_8 (s^{\aa/2-n}\lor s^{1-n})\big)\,\d x\ge c_{9} s^n N\big(c_8 (s^{\aa/2-n}\lor s^{1-n})\big),\ \ s>0.\end{split}\end{equation*}
Combining this with all the estimates above, we obtain that
$$1\ge \Big(\limsup_{s\downarrow 0} +\limsup_{s\uparrow \infty}\Big) c_{9} s^n N\big(c_8 (s^{\aa/2-n}\lor s^{1-n})\big)=\infty,$$ which is impossible. Therefore, \eqref{N3} does not hold for $N$, and so we verify the sharpness of \eqref{N3}.
\end{proof}

\subsection{Extension to discrete $\alpha$-stable Dirichlet form}
In this subsection, let $E=\Z^n$ and $\mu$ be the counting measure. Under this setting, the Orlicz norm $\|\cdot\|_N$ for a Young function $N$ is essentially determined by $N(s)$ for small $s>0$. In particular,  for any two Young functions $N_1$ and $N_2$, $\|\cdot\|_{N_1} \le c \|\cdot\|_{N_2}$ for some constant $c>0$ if and only if there is a constant $c'>0$ such that $N_1(s)\le c'N_2(s)$ holds for all $s\in [0,1].$
Moreover, since $|x-y|\ge 1$ for $x\ne y$,  we have
$2\le 2+|x-y|^{-1} \le 3$, and for $\aa_1\le \aa_2$,
$$|x-y|^{n+\aa_1/2} \lor |x-y|^{n+\aa_2/2}=|x-y|^{n+\aa_2/2},\ \ |x-y|^{n+\aa_1/2} \land |x-y|^{n+\aa_2/2}=|x-y|^{n+\aa_1/2}.$$ Therefore, in assertion (1) of Theorem $\ref{T1.1}$ we will take $\aa_1=\aa_2=\aa$, and in assertion (2) we only consider $N_\aa^{\log, p,-}:=  \big\{s\log^p\left(\ll+ s^{-1}\right)\big\}^{\ff n{n-\aa/2}}$ with some constant $\ll\ge2$.

\begin{thm} Assertions in Theorem $\ref{T1.1}$ hold for the counting measure $\mu$ on $\mathbb Z^n$ replacing the Lebesgue measure on $\R^n$. More precisely, for any $\aa\in (0,2)$ and $h\in \H_\aa$, there exists a constant $C>0$ such that
$$ \|f\|_{N_h} \le C \sum_{x,y\in\Z^n,x\ne y}  |f(x)-f(y)|\ff{ h(|x-y|)}{\,\,|x-y|^{n+\aa}},\ \ f\in L_{N_h}(\Z^n).$$
Consequently: \beg{enumerate} \item[$(1)$] There exists a constant $C>0$ such that
$$ \|f\|_{\ff n{n-\aa/2}}  \le C \sum_{x,y\in\Z^n,x\ne y} \ff{|f(x)-f(y)| }{|x-y|^{n+\aa/2} },\ \ f\in L^{\ff n {n-\aa/2}}(\Z^n).$$
This inequality fails if $\|\cdot\|_{\ff n{n-\aa/2}}$ is replaced by $\|\cdot\|_{L_N}$ for a Young function $N$ with $$\limsup_{s\to 0} s^{\ff {n-\aa/2}{n}} N(s) =\infty.$$
\item[$(2)$] For any $\aa\in (0,2)$ and $p\in \R$,   there exists a constant $C>0$ such that for all $f\in L_{N^{log,p,-}_{\aa}}(\Z^n),$
$$ \|f\|_{N_\aa^{log,p,-}}  \le C \sum_{x,y\in\Z^n,x\ne y}|f(x)-f(y)| \ff{ \{\log(2+|x-y|)\}^p}{|x-y|^{n+\aa/2}}.$$
This inequality  fails  if $N_\aa^{log,p,-}$  is replaced by a Young function $N$ with $$\limsup_{s\to 0} \ff{N(s)}{N_\aa^{log,p,-}(s)} =\infty.$$    \end{enumerate}
\end{thm}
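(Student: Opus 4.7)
The plan is to imitate the proof of Theorem \ref{T1.1} line by line, replacing the continuous $\alpha$-stable Dirichlet form on $\R^n$ with the discrete analogue on $\Z^n$ with counting measure. First I would set up the data: consider
$$\E(f,g)=\ff12\sum_{x,y\in\Z^n,x\ne y}(f(x)-f(y))(g(x)-g(y))\,\ff{1}{|x-y|^{n+\aa}},$$
denote by $P_t$ the associated semigroup, and invoke the heat-kernel upper bound
$\|P_t\|_{L^1(\mu)\to L^\infty(\mu)}\le c_1 t^{-n/\aa}$ (valid for all $t>0$, with any $t\le 1$ handled trivially by $p_t(x,y)\le 1$) together with the Lipschitz-type gradient estimate $\operatorname{ess\,sup}_{x\ne y}|P_tg(x)-P_tg(y)|/|x-y|\le c_1 t^{-1/\aa}\|g\|_\infty$. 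Both are standard for long-range random walks in the domain of attraction of $\aa$-stable laws and can be quoted from the same references (\cite{CK},\cite{SSW}) used in the proof of Theorem \ref{T1.1}. The first bound yields the super Poincar\'e inequality \eqref{SP1} with $\bb(r)=c_2 r^{-n/\aa}$.

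Second, taking $\gg(x,y)=h(|x-y|)$ with $h\in\H_\aa$, exactly the computation leading to \eqref{BW2} in the continuous proof gives $\theta_\gg(t)\le 2c_1/h(t^{1/\aa})$, and then
$$\Phi_\gg(s)=\int_0^s \d r\int_0^{\bb^{-1}(r)}\theta_\gg(t)\,\d t\le c_3\,\Phi_h(c_4 s),\ \ s\ge 0.$$
An application of Theorem \ref{T2.1} together with \eqref{*WH} produces the master inequality
$$\|f\|_{N_h}\le C\sum_{x,y\in\Z^n,\,x\ne y}|f(x)-f(y)|\,\ff{h(|x-y|)}{|x-y|^{n+\aa}},\quad f\in L_{N_h}(\Z^n),$$
which is the discrete counterpart of \eqref{ON}. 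The choices $h(s)=s^{\aa/2}$ (for part (1)) and $h(s)=s^{\aa/2}\{\log(\ll_0+s)\}^p$ with $\ll_0\ge2$ large (for part (2)) then reduce to the stated special cases. The restriction to $\aa_1=\aa_2=\aa$ and to the $N_\aa^{\log,p,-}$ variant reflects that on $\Z^n$ the nontrivial ratio is $|x-y|\ge 1$, so $|x-y|^{n+\aa_1/2}\vee|x-y|^{n+\aa_2/2}$ collapses to a single power and $\log(2+|x-y|^{-1})$ stays bounded; those additional flexibilities are redundant here.

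Third, for sharpness I would use the analogue of the test functions from the continuous argument, namely $f_s(x)=(s-|x|)^+$, $x\in\Z^n$, but now with $s\ge 1$ large. A routine count shows $\#\{x\in\Z^n:|x|\le s\}\asymp s^n$ and, as in \eqref{N13},
$$\sum_{x,y\in\Z^n,\,x\ne y}\ff{|f_s(x)-f_s(y)|}{|x-y|^{n+\aa/2}}\le c_7 s^{n+1-\aa/2},$$
with a similar bound with the $\log$-weight for part (2). On the other hand, evaluating the Luxemburg norm on the sublevel set $\{s/4\le|x|\le 3s/4\}$ as in \eqref{NM} gives $\|f_s\|_N\gtrsim s\,N^{-1}(c s^{-n})^{-1}$. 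Combining, a putative inequality with $N$ on the left would force $N(u)\lesssim N_h(u)$ for $u$ in the range produced by $s\to\infty$, i.e.\ for small arguments — precisely the regime of $N$ that the stated failure condition excludes. Deriving the contradiction from the assumed growth of $N(s)/N_\aa^{\log,p,-}(s)$ (or the corresponding quantity in part (1)) along a sequence $s\to 0$ mimics the last display in the proof of Theorem \ref{T1.1}.

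The main obstacle is conceptual bookkeeping rather than new ideas: I need to confirm that the heat-kernel and gradient bounds quoted from \cite{CK,SSW} are stated in the discrete setting in a form directly applicable to Theorem \ref{T2.1} (in particular, that they hold uniformly in $t>0$, not only for large $t$, so that $\Phi_\gg$ really dominates $\Phi_h$ up to constants), and that the sharpness test functions — which only probe small arguments of $N$ on $\Z^n$ — give exactly the asserted one-sided limsup obstruction, the two-sided "$N\npreceq\cdot$" condition of Theorem \ref{T1.1} degenerating to its $s\downarrow 0$ half. Everything else is a transcription of the continuous proof with sums replacing integrals.
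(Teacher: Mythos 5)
The central step you treat as routine --- quoting the gradient estimate $\|\nabla P_t f\|_\infty \le c_1 t^{-1/\alpha}\|f\|_\infty$ for the discrete $\alpha$-stable Dirichlet form directly from \cite{CK,SSW} --- is exactly where the paper departs from a line-by-line transcription of the continuous proof, and your proposal leaves a genuine gap there. The reference \cite{SSW} establishes gradient estimates for L\'evy semigroups on $\R^n$ via the characteristic exponent; it does not cover symmetric Markov chains on $\Z^n$ with kernel $|x-y|^{-(n+\alpha)}$. The paper itself flags this: the on-diagonal bound \eqref{UP1} ``can be easily deduced from the Nash inequality,'' but ``to prove explicit gradient estimate \eqref{UP2} we need additional arguments.'' Your plan defers those additional arguments to a closing remark about ``conceptual bookkeeping,'' but they are the substance of the proof.

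What the paper actually does is exploit the freedom in Theorem \ref{T2.1}: it suffices to produce \emph{some} symmetric Dirichlet form comparable to $\E_\alpha$ (in the sense of \eqref{CP}) whose semigroup satisfies both \eqref{UP1} and \eqref{UP2}, since the comparability passes through the super Poincar\'e inequality and the isoperimetric constants. Concretely, the paper takes the simple random walk $Y'$ on $\Z^n$, subordinates it in discrete time by the Bernstein function $\psi(r)=r^{\alpha/2}$ (following \cite{BS}) to obtain a chain $X'$ with one-step kernel $p_1(x,y)\asymp |x-y|^{-(n+\alpha)}$, and then runs $X'$ at Poisson times $N_t$ to obtain the continuous-time chain $X_t$. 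The comparability \eqref{e:3.19} is established from two-sided Gaussian bounds for $q_k$ \eqref{e:3.15}--\eqref{e:3.16} and the asymptotics $c(\psi,k)\asymp k^{-(1+\alpha/2)}$. The decisive step is then the identification $X_t \stackrel{d}{=} Y_{S_t}$ with $S_t$ an $\alpha/2$-subordinator (\cite{An}), which lets one push the heat-kernel and discrete gradient bounds for the random walk semigroup $Q_t$ (from \cite{HS,Te}, combined with an inverse Poisson-moment expansion) through subordination and use $\mathbb{E}\,S_t^{-\lambda}\asymp t^{-2\lambda/\alpha}$ to land on \eqref{UP1} and \eqref{UP2}. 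This is a construction, not a citation. Your reduction of the $N^\wedge$, $N^\vee$, $N^{\log,q,+}$ variants to the two surviving discrete cases, and the outline of the sharpness argument with $f_s(x)=(s-|x|)^+$ for $s\to\infty$, are both consistent with the paper; but without the subordination construction (or some other genuine proof of a discrete gradient estimate), the master inequality on $\Z^n$ is not actually established.
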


\begin{proof} According to the proof of Theorem \ref{T1.1}, it suffices to construct a symmetric sub-Markov semigroup $P_t$ on $L^2(\mu)$ such that the associated   Dirichlet form $(\E,\D(\E))$ is comparable with
$$\E_\aa(f,f):= \ff 1 2 \sum_{x,y\in \Z^n, x\ne y} \ff{(f(x)-f(y))^2 }{|x-y|^{n+\aa}},\ \ \D(\E_\aa):= \{f\in L^2(\mu), \E_\aa(f,f)<\infty\},$$
i.e., $\D(\E)= \D(\E_\aa)$ and there exists a constant $C\ge 1$ such that
\beq\label{CP} C^{-1}  \E(f,f)\le \E_\aa(f,f)\le C \E(f,f),\ \ f\in \D(\E)=\D(\E_\aa);\end{equation} and moreover, both \eqref{UP1} and \eqref{UP2} are satisfied for $P_t$, where in \eqref{UP2}  $$\|\nn P_tf\|_\infty := \sup_{x,y\in \Z^n, |x-y|=1} |P_t f(x)-P_t f(y)|.$$

Condition \eqref{UP1} can be easily deduced from the Nash inequality for $(\E_\aa,\D(\E_\aa))$ (see for example \cite[Proposition 2.1]{MS2}), but to prove 
explicit gradient estimate 
\eqref{UP2} we need additional arguments. Below we first construct the required semigroup $P_t$ then verify these two estimates.

 (1) \textbf{Construction of $P_t$.}  Let $q_k(x,y)$ and $Q^k$ be the transition function and the semigroup for discrete time simple random walk $Y'=(Y'_k)_{k\ge0}$ on $\Z^n$, respectively. It is known (see \cite{Te} or \cite[Theorem 5.1]{HS}) that there are constants $c_i>0 $ $(i=1,\cdots 5)$ so that
\begin{equation}\label{e:3.15}q_k(x,y)\le \frac{c_1}{k^{n/2}}\exp\left(\frac{-c_2|x-y|^2}{k}\right),\quad x,y\in \Z^n, k\ge 1,\end{equation}
\begin{equation}\label{e:3.16}q_k(x,y)+q_{k+1}(x,y)\ge \frac{c_3}{k^{n/2}}\exp\left(\frac{-c_4|x-y|^2}{k}\right),\quad x,y\in \Z^n, k\ge |x-y|\end{equation} and
\begin{equation}\label{e:3.17}\begin{split}\|\nabla Q^kf\|_\infty:=&\sup_{x\in \Z^n}\sup_{y\in \Z^n,|y-x|=1}|Q^kf(y)-Q^kf(x)| \\
\le & c_5 k^{-1/2} \|f\|_\infty,\quad k\ge 1, f\in L^\infty(\Z^n).\end{split}\end{equation}

Consider the discrete subordination of $Y'$ by the Bernstein function $\psi(r)=r^{\aa/2}$ with $\aa\in (0,2)$, see \cite{BS}. Denote by $X'=(X'_k)_{k\ge0}$ the corresponding discrete time subordinated Markov chain on $\Z^n$, and by $p_k(x,y)$ the transition function of $X'$. Then, according to \cite[Proposition 2.3 and Example 2.1]{BS}, \begin{equation}\label{e:3.18}p_1(x,y)=\sum_{k=1}^\infty c(\psi,k)q_k(x,y),\ \ k\ge 1, x,y\in \Z^n,\end{equation} where
$$c(\psi,k)= \frac{\aa}{2\Gamma(1-\aa/2)}\frac{\Gamma(k-\aa/2)}{\Gamma(k+1)}.$$
We  claim that \begin{equation}\label{e:3.19}\frac{c_0^{-1}}{|x-y|^{n+\aa}}\le p_1(x,y)\le \frac{c_0}{|x-y|^{n+\aa}},\quad x,y\in \Z^n, x\neq y\end{equation} holds for some constant $c_0\ge1$. Indeed, by \cite{TE},
\begin{equation}\label{e:3.20}c_6^{-1}k^{-(\aa/2+1)}\le c(\psi,k)\le c_6k^{-(\alpha/2+1)},\ \ k\ge 1\end{equation} holds for some constant $c_7\ge1$. Then, by \eqref{e:3.16}, \eqref{e:3.18} and \eqref{e:3.20}, we have
\begin{align*} p_1(x,y)&\ge \frac{1}{2}\bigg(\sum_{k=1}^\infty c(\psi,k)q_k(x,y)+\sum_{k=1}^\infty c(\psi,k+1)q_{k+1}(x,y)\bigg)\\
&\ge c_7\sum_{k=1}^\infty \frac{1}{k^{1+\aa/2}}(q_k(x,y)+q_{k+1}(x,y))\ge c_8\sum_{k=|x-y|^2}^\infty \frac{1}{k^{1+(\aa+n)/2}}\ge \ff{c_9}{|x-y|^{n+\aa}}.\end{align*} On the other hand, according to \eqref{e:3.15}, \eqref{e:3.18} and \eqref{e:3.20},
\begin{align*} p_1(x,y)&\le  c_7\sum_{k=1}^\infty \frac{1}{k^{1+\aa/2}}q_k(x,y)\\
&\le  c_9\left(\sum_{k=|x-y|^2}^\infty \frac{1}{k^{1+(\aa+n)/2}}+ \sum_{k=1}^{|x-y|^2}  \frac{1}{k^{1+(\aa+n)/2}}\exp\left(\frac{-c_2|x-y|^2}{k}\right)\right)\\
&\le \ff{c_{10}} {|x-y|^{n+\aa}}.\end{align*} Thus, \eqref{e:3.19} is proved.

Let $N_t$ be a Poisson process independent of $X'$ and $Y'$. Set
$X_t=X'_{N_t}$ and $Y_t=Y'_{N_t}$ for all $t\ge0$. Therefore,  by \eqref{e:3.19},  $X=(X_t)_{t\ge0}$ is a continuous time symmetric Markov chain on $\Z^n$ such that the associated Dirichlet form $(\E,\D(\E))$ is comparable with $\E_\aa$, i.e., \eqref{CP} holds for some constant $C>1.$
Let $P_t$ be the Markov semigroup of $X_t$.

(2) \textbf{Proofs of \eqref{UP1} and \eqref{UP2}.}  Let $Q_t$ be the Markov semigroup of $Y_t$. We have
$$Q_t=\e^{-t}\sum_{k=0}^\infty \frac{ t^k Q^k}{k!},\quad t>0.$$
Then, by \eqref{e:3.15}, for any $f\in L^\infty(\Z^n)$ and $t>0$,
\begin{equation}\label{e:3.21}\begin{split}\|Q_tf\|_\infty&\le \e^{-t}\|f\|_\infty+ c_1\e^{-t} \sum_{k=1}^\infty \frac{ t^k k^{-n/2}}{k!}\|f\|_\infty\\
&\le \e^{-t}\|f\|_\infty+ c_{11}t^{-d/2}\|f\|_\infty\le  \frac{c_{12} }{t^{n/2}}\|f\|_\infty,\end{split}\end{equation} where in the second inequality we have used the expansion for inverse moments of Poisson distribution, see \cite[(29) in Corollary 3]{Z}. By \eqref{e:3.17}, we also have that for any $f\in L^\infty(\Z^n)$ and $t>0$,
\begin{equation}\label{e:3.22}\begin{split}\|\nabla Q_tf\|_\infty&\le \e^{-t}\|f\|_\infty+ c_6\e^{-t} \sum_{k=1}^\infty \frac{ t^k k^{-1/2}}{k!}\|f\|_\infty\le \frac{c_{13} }{t^{1/2}}\|f\|_\infty,\end{split}\end{equation} where in the last inequality we have used again
 \cite[(29) in Corollary 3]{Z}.

On the other hand, let $S_t$ be the $\aa/2$-subordinator, which is independent of $X,X',Y$ and $Y'$. According to \cite[Proposition 1.2]{An}, we know that $X_t$ and $Y_{S_t}$ enjoy the same distribution. That is,
$$P_tf=\int_0^\infty Q_sf\,\P(S_t\in \d s),\qquad t\ge 0, f\in L^\infty(\Z^n).$$
This along with \eqref{e:3.21} and \eqref{e:3.22} yields
$$\frac{\|P_tf\|_\infty}{\|f\|_\infty}\le c_{12}\int_0^\infty s^{-n/2}\, \P(S_t\in \d s)\le c_{14} t^{-n/\alpha},\ \ t\ge 0, f\in L^\infty(\Z^n)$$  and
$$\frac{\|\nabla P_tf\|_\infty}{\|f\|_\infty}\le c_{13}\int_0^\infty s^{-1/2}\, \P(S_t\in \d s)\le c_{15} t^{-1/\alpha},\ \ t\ge 0, f\in L^\infty(\Z^n),$$ where we used the fact that $\mathbb{E} S_t^{-\ll}= c_{\aa,\ll} t^{-2\ll/\aa}$ for all $t,\ll>0$, see \cite[(25.5)]{Sato}.  Therefore,    \eqref{UP1} and \eqref{UP2} hold.
 \end{proof}

\section{Isoperimetric inequalities for $\aa$-stable-like Dirichlet forms: a perturbation argument }
Let $n\ge 2$ and $\aa\in (0,2)$.
Let $W\in \B(\R^n)$ be such that $\mu_W(\d x):= \e^{-W(x)}\,\d x$ is a probability measure. Consider the following $\aa$-stable-like Dirichlet form $(\E_{\aa,W},\D(\E_{\aa,W}))$:
\begin{equation}\label{e:stable}\begin{split}\E_{\aa,W}(f,f):=& \int_{\R^n\times \R^n} \ff{|f(x)-f(y)|^2}{|x-y|^{n+\alpha}}\,\d y\,\mu_W(\d x), \\
\D(\E_{\aa,W}):=&\{f\in L^2(\R^n,\mu_W): \D(\E_{\aa,W})(f,f)<\infty\}.\end{split} \end{equation} Obviously,    $C_c^\infty(\R^n)\subset\D(\E_{\aa,W}).$  See \cite{WW, CWW} for explicit criteria of Poincar\'e-type (i.e.,  Poincar\'e, weak Poincar\'e and super Poincar\'e) inequalities for this Dirichlet form.

  Since it is not clear how to verify the regularity property (e.g.\ gradient estimates) for the associated semigroup $P_t$,  we could not apply Theorem \ref{T2.1} for non-negative symmetric function $\gg(x,y)$ satisfying  $\gg(x,y)\to 0$ as $y\to x$.  So, in this section, we will establish isoperimetric inequalities for $(\E_{\aa,W},\D(\E_{\aa,W}))$  by using a perturbation argument. The main result of this section is the following.

\begin{thm}\label{T:thm} Let $n\ge 2$ and $\aa\in (0,2)$. Let $W\in \B(\R^n)$ be such that $\mu_W(\d x)= \e^{-W(x)}\,\d x$ is a probability measure. Set
$$\Phi(l):= \inf_{|x|\ge l} \frac{\e^{W(x)}}{|x|^{n+\aa/2}},\quad l\ge 1.$$
\begin{itemize}
\item[$(1)$] If $\lim_{l\to\infty}\Phi(l) >0,$ then there are constants $c_1,c_2>0$ such that for any $f\in C_c^\infty(\R^n)$,  \begin{equation}\label{e:the1}\mu_W(f^2)\le c_1\int_{\R^n\times \R^n} \ff{|f^2(x)-f^2(y)|}{|x-y|^{n+\alpha/2}}\,\d y\,\mu_W(\d x)+c_2 \mu_W(|f|)^2.\end{equation}
\item[$(2)$] Let $W$ be locally Lipschitz continuous. If $\lim_{l\to\infty} \Phi(l)=\infty,$
then for any $r>0$ and $f\in C_c^\infty(\R^d)$,
\begin{equation}\label{e:the2}\mu_W(f^2)\le r\int_{\R^n\times \R^n} \ff{|f^2(x)-f^2(y)|}{|x-y|^{n+\alpha/2}}\,\d y\,\mu_W(\d x)+\bb(r) \mu_W(|f|)^2,\end{equation} where
\begin{align*}\beta(r)=\inf\bigg\{&2c_1\left(s^{-2n/\aa}+s^{-n}\right)\left(\Hup_{|z|\le l+1}\e^{W(z)/2}\right):s+\frac{1}{\Phi(l-1)} \le c_2(r\wedge1)\\
&  \qquad\qquad\qquad\qquad\qquad \qquad \qquad \qquad \text{ and } \Hup_{|z|\le l+2} \e^{2|\nabla W(z)|}\le \ff {c_3}{s}  \bigg\}\end{align*} with some constants $c_1,c_2,c_3>0$.
\end{itemize}\end{thm}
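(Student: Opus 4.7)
The argument is a perturbation that splits $\R^n$ into an inner ball $B(0,l)$ and its complement. On the complement, the pointwise decay $\e^{-W(x)}\le \Phi(l)^{-1}|x|^{-(n+\aa/2)}$ converts $\mu_W$-mass directly into an $L^1$-jump integral against the $\aa/2$-stable kernel; on the ball, $\mu_W$ is comparable to Lebesgue measure with explicit constants, and the $L^1$-Sobolev inequality \eqref{S9'} (valid for our $\aa$ since $\aa/2<n$, thanks to $n\ge 2$) together with its equivalent $L^1$-Poincar\'e form from Corollary \ref{C4.1} provides the ambient inequality. By Theorem \ref{T2.0}(2), both \eqref{e:the1} and \eqref{e:the2} are equivalent to isoperimetric estimates for sets, but we argue at the functional level.

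For the outer piece, fix $l\ge 2$ and write
$$\int_{|x|\ge l-1}f^2\,\d\mu_W\le\Phi(l-1)^{-1}\int_{|x|\ge l-1}\ff{f^2(x)}{|x|^{n+\aa/2}}\,\d x.$$
Using $|x|^{-(n+\aa/2)}\le C_n|B(0,1)|^{-1}\int_{B(0,1)}|x-y|^{-(n+\aa/2)}\,\d y$ for $|x|\ge 1$ and the splitting $f^2(x)=[f^2(x)-f^2(y)]+f^2(y)$, the first term produces a jump integral bounded by a constant multiple of the $\mu_W$-weighted Dirichlet form on the right of \eqref{e:the1} (via $\e^{-W(y)}\ge\inf_{B(0,1)}\e^{-W}>0$ and symmetry in $x,y$), while the second is a residual of order $\Phi(l-1)^{-1}l^{-\aa/2}\int_{B(0,1)}f^2\,\d y$ to be reabsorbed into the inner estimate.

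For the inner piece, $\int_{B(0,l-1)}f^2\,\d\mu_W\le\sup_{|z|\le l-1}\e^{-W(z)}\int_{B(0,l-1)}f^2\,\d x$ reduces to a Lebesgue problem; a local $L^1$-Poincar\'e inequality on the ball, derived from Corollary \ref{C4.1}(1) applied to a cutoff of $f$, gives $\int_{B(0,l-1)}f^2\,\d x\le C(l)\int\int\ff{|f^2(x)-f^2(y)|}{|x-y|^{n+\aa/2}}\,\d x\,\d y+C'(l)\|f\|_{L^1(B(0,l-1))}^2$. Converting back to the $\mu_W$-weighted form uses that one jump endpoint lies in $B(0,l+1)$ so that $\e^{-W(x)}+\e^{-W(y)}\ge\inf_{B(0,l+1)}\e^{-W}$ (contributing $\sup_{|z|\le l+1}\e^W$), and the $L^1$ term becomes $\mu_W(|f|)^2$ up to another such factor. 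For part (1), $\Phi$ is bounded below at infinity, so fixing $l$ large enough makes the absorbing factor less than $1/2$ and gives \eqref{e:the1} with explicit $c_1,c_2$.

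For part (2) we treat $l$ and an auxiliary scale $s$ as free parameters coupled by $s+\Phi(l-1)^{-1}\le c_2(r\wedge 1)$, which is feasible for every $r>0$ because $\Phi(l)\to\infty$. Local Lipschitz continuity of $W$ on $B(0,l+2)$ is used to compare $\e^{-W(y)}$ with $\e^{-W(x)}$ for nearby points via $\e^{\pm|\nabla W|\,|x-y|}$; controlling this on the effective jump scale $\sim s^{1/\aa}$ yields the constraint $\sup_{|z|\le l+2}\e^{2|\nabla W(z)|}\le c_3/s$. The two exponents $s^{-2n/\aa}$ and $s^{-n}$ match the Poincar\'e rate $\bb_1(r)\sim r^{-2n/\aa}\vee r^{-n}$ of the truncated $\aa$-stable form (Theorem \ref{T1.2}'s $\tilde N_\aa(s)=s^{n/(n-\aa/2)}\wedge s^{n/(n-1)}$ translated via Corollary \ref{C4.1}(1)), which governs the local inequality on $B(0,l+1)$; a Cauchy--Schwarz step in the transfer to $\mu_W$ explains the half-exponent in $\sup\e^{W/2}$. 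The principal obstacle is this joint optimization in $l$ and $s$: tracking all local Lipschitz and supremum constants of $W$ through the perturbation while verifying the stated explicit form of $\bb(r)$.
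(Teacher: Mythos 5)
Your overall plan — split $\R^n$ into a ball and its complement, control the far field by the decay of $\e^{-W}$, and control the ball by a Lebesgue $L^1$-Poincar\'e for the truncated $\aa$-stable form (i.e.\ Lemma~\ref{L5.2}, obtained from Theorem~\ref{T1.2} and Corollary~\ref{C4.1}(1), with rate $c(r^{-2n/\aa}+r^{-n})$) — is the same blueprint the paper follows. The paper packages the two pieces slightly differently (Lemma~\ref{L:1} derives \eqref{e:the1}/\eqref{e:the2} from an assumption on the isoperimetric constant $\kappa_W(B_l^c)$ of \eqref{e:ff001}, and Lemma~\ref{L:2} then bounds $\kappa_W(B_l^c)\ge c_0\Phi(l)$), but the decomposition and the optimization in the ball radius $l$ and the Poincar\'e parameter $s$ are essentially the ones you describe.

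However, there are two concrete gaps in your sketch. First, at several points you divide and multiply by $\e^{-W}$ pointwise on a ball, invoking $\inf_{B(0,1)}\e^{-W}>0$ (for the outer piece) and $\inf_{B(0,l+1)}\e^{-W}>0$ (to transfer the Lebesgue jump form back to the $\mu_W$-weighted one). Neither of these infima is guaranteed to be positive: in part~(1) the hypothesis is only $W\in\B(\R^n)$, so $W$ may be unbounded above on compact sets. The paper avoids this by never comparing pointwise with $\inf\e^{-W}$. For the far field, Lemma~\ref{L:2} bounds the isoperimetric ratio by an \emph{integral}, essentially $\mu_W(B(0,1))>0$, which holds for any probability density. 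For the inner piece, Lemma~\ref{L:1} does not first prove a Lebesgue Poincar\'e on the ball and transfer afterward; it applies Lemma~\ref{L5.2} \emph{directly} to the already-weighted function $g:=|f|\e^{-W/2}\psi_{l,k}^{1/2}$ (so $g^2=f^2\e^{-W}\psi_{l,k}$). This is also where the $\sup_{|z|\le l+k}\e^{W(z)/2}$ factor comes from — by estimating $\int|f|\e^{-W/2}\psi_{l,k}^{1/2}\le(\sup\e^{W/2})\,\mu_W(|f|)$ on the support of the cutoff — not from a Cauchy–Schwarz step as you propose; that attribution is wrong. Second, your treatment of the regularity constraint $\sup_{|z|\le l+2}\e^{2|\nabla W(z)|}\le c_3/s$ is only gestured at: in the paper this term arises, quite specifically, from expanding the quantity $|f^2(x)\e^{-W(x)}\psi(x)-f^2(y)\e^{-W(y)}\psi(y)|$ by the triangle inequality, which produces a term with $|1-\e^{W(x)-W(y)}|$ integrated over $|x-y|\le 1$; the elementary inequality $|\e^r-1|\le\e^{2|r|}$ and the $r$-prefactor from Lemma~\ref{L5.2} turn this into the stated constraint. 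Your sketch does not produce this term, so the explicit form of $\bb(r)$ in part~(2) is not actually derived; you acknowledge as much at the end, but that is precisely where the content of the theorem lies.
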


To prove Theorem \ref{T:thm}, we will make perturbation to   the following Poincar\'e type isoperimetric inequality for the truncated $\aa$-stable Dirichlet from on $\R^n.$

\begin{lem}\label{L5.2} There is a constant $c>0$ such that for all $f\in C_c^\infty(\R^n)$,
$$\int f(x)^2\,\d x\le r \int_{\{|x-y|\le 1\}} \frac{|f^2(x)-f^2(y)|}{|x-y|^{n+\aa/2}}\,\d x\,\d y+ c(r^{-2n/\aa}+r^{-n})\left(\int|f|(x)\,\d x\right)^2,\ \ r>0.$$ \end{lem}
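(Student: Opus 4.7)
\textbf{Proof proposal for Lemma \ref{L5.2}.} The plan is to obtain this as a direct application of the $L^{1}$ Orlicz--Sobolev inequality for the truncated $\alpha$-stable form (Theorem \ref{T1.2}, inequality \eqref{N3}) combined with the general equivalence between Orlicz--Sobolev and Poincar\'e-type isoperimetric inequalities in Corollary \ref{C4.1}(1). No new heat-kernel, super-Poincar\'e, or subordination estimates are needed, since all the analytic work has been done in Sections 2 and 3.

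First I would set up the framework in Corollary \ref{C4.1} by taking $E=\R^{n}$, $\mu$ the Lebesgue measure, and the symmetric measure
$$J_{\gg}(\d x,\d y):=\mathbf{1}_{\{|x-y|\le 1\}}\,|x-y|^{-(n+\alpha/2)}\,\d x\,\d y.$$
Theorem \ref{T1.2} (specifically \eqref{N3}) then states that the $L^{1}$ Orlicz--Sobolev inequality \eqref{OS} holds for $J_{\gg}$ with the Young function $\tt N_{\alpha}(s)=s^{n/(n-\alpha/2)}\land s^{n/(n-1)}$.

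Next I would check the structural hypothesis of Corollary \ref{C4.1}, namely that $s\mapsto s^{-1}\tt N_{\alpha}(s)$ is increasing on $(0,\infty)$: since $\tt N_{\alpha}(s)=s^{n/(n-1)}$ on $[0,1]$ and $\tt N_{\alpha}(s)=s^{n/(n-\alpha/2)}$ on $[1,\infty)$, we have $s^{-1}\tt N_{\alpha}(s)=s^{1/(n-1)}$ on $[0,1]$ and $s^{(\alpha/2)/(n-\alpha/2)}$ on $[1,\infty)$, both strictly increasing and matching at $s=1$. With $p_{1}=n/(n-\alpha/2)$ and $p_{2}=n/(n-1)$, one computes $p_{1}/(p_{1}-1)=2n/\alpha$ and $p_{2}/(p_{2}-1)=n$, so Corollary \ref{C4.1}(1) yields
$$\|f\|_{2}^{2}\le r\int_{\{|x-y|\le 1\}}\frac{|f^{2}(x)-f^{2}(y)|}{|x-y|^{n+\alpha/2}}\,\d x\,\d y+c\bigl(r^{-2n/\alpha}\lor r^{-n}\bigr)\|f\|_{1}^{2},\quad r>0,$$
for some $c>0$. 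Bounding $r^{-2n/\alpha}\lor r^{-n}\le r^{-2n/\alpha}+r^{-n}$ and restricting to $f\in C_{c}^{\infty}(\R^{n})$ then gives the stated inequality.

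There is no real obstacle, as the lemma is essentially a specialization of the general machinery: Theorem \ref{T1.2} provides the $L^{1}$-side inequality with the sharp Young function, and Corollary \ref{C4.1}(1) handles the passage to the $L^{2}$ Poincar\'e-type form and identifies the rate $\beta_{1}(r)=c(r^{-2n/\alpha}\lor r^{-n})$ automatically from the exponents $p_{1},p_{2}$. The only thing to double-check is the monotonicity hypothesis on $s^{-1}\tt N_{\alpha}(s)$, which is immediate from the piecewise-power form of $\tt N_{\alpha}$.
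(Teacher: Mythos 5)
Your proposal is correct and is exactly the paper's proof, which simply cites \eqref{N3} and Corollary \ref{C4.1}(1); you have usefully filled in the routine verifications (the monotonicity of $s\mapsto s^{-1}\tt N_\alpha(s)$ and the exponent arithmetic $p_1/(p_1-1)=2n/\alpha$, $p_2/(p_2-1)=n$) that the paper leaves implicit.
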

\begin{proof}This follows from \eqref{N3} and Corollary \ref{C4.1}(1).\end{proof}

For any $D\subset \R^n$, consider the isoperimetric constant
\begin{equation}\label{e:ff001}\begin{split}\kk_W(D):=&\frac{1}{2} \inf\left\{ \frac{1 }{\mu_W(A)}\int_{A\times  A^c}\frac{\e^{W(x)}+\e^{W(y)}}{|x-y|^{n+\aa/2}}\,\mu_W(\d x)\,\mu_W(\d y):A\Subset D, \mu_W(A)>0\right\}\\
=&\inf\left\{\frac{1}{\mu_W(f)}\int\frac{|f(x)-f(y)|}{|x-y|^{n+\aa/2}}\,\d y\,\mu_W(\d x): f\ge0, f|_{D^c}=0, \mu_W(f)>0 \right\},\end{split}\end{equation} where the second equality in \eqref{e:ff001} can be verified by the co-area formula, see \cite[Theorem 3.1]{LS}.

\begin{lem}\label{L:1}Let $n\ge2$ and $\aa\in (0,2)$.   Let
$B_l=\{|\cdot|<l\}$ for $l>0.$
\begin{itemize}
\item[$(1)$] If \begin{equation}\label{e:ps11}\lim_{l\to\infty} \kappa_W(B_{l}^c)>0,\end{equation} then \eqref{e:the1} holds with some constants $c_1,c_2>0$.
\item[$(2)$] Let $W$ be locally Lipschitz continuous. If \begin{equation}\label{e:ps12}\lim_{l\to\infty} \kappa_W(B_{l}^c)=\infty,\end{equation} then \eqref{e:the2} holds with \begin{align*}\beta(r)=\inf\bigg\{&2c_1\left(s^{-2n/\aa}+s^{-n}\right)\left(\Hup_{|z|\le l+1}\e^{W(z)/2}\right):s+\frac{1}{\kappa_W(B_{l-1}^c)} \le c_2(r\wedge1)\\
& \qquad\qquad\qquad\qquad\qquad \qquad \qquad \qquad  \text{ and } \Hup_{|z|\le l+2} \e^{2|\nabla W(z)|}  \le \frac{c_3}{s} \bigg\}\end{align*} for some constants $c_1,c_2,c_3>0$.
\end{itemize}
\end{lem}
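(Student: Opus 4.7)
My plan is to prove both (1) and (2) by a common cutoff decomposition: split $f$ into a local part supported in $B_l$ and a tail part supported in $B_{l-1}^c$, estimate the former by the truncated-stable inequality of Lemma \ref{L5.2} and the latter directly by the isoperimetric characterization \eqref{e:ff001}, and finally optimize in an auxiliary scale $s$ and the radius $l$. Fix a smooth cutoff $\phi_l$ with $\phi_l\equiv 1$ on $B_{l-1}$, $\phi_l\equiv 0$ on $B_l^c$ and $|\nn\phi_l|\le 2$, and write $\mu_W(f^2)\le 2\mu_W((f\phi_l)^2)+2\mu_W((f(1-\phi_l))^2)$.

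For the tail, apply the second characterization in \eqref{e:ff001} to the non-negative function $(f(1-\phi_l))^2$, which is supported in $B_{l-1}^c$:
\[
\mu_W((f(1-\phi_l))^2)\le \ff{1}{\kk_W(B_{l-1}^c)}\int\ff{|(f(1-\phi_l))^2(x)-(f(1-\phi_l))^2(y)|}{|x-y|^{n+\aa/2}}\,\d y\,\mu_W(\d x).
\]
Expanding $(f\psi)^2(x)-(f\psi)^2(y)=\psi^2(x)(f^2(x)-f^2(y))+f^2(y)(\psi^2(x)-\psi^2(y))$ with $\psi=1-\phi_l$, the right-hand side splits into a constant multiple of the target $\mu_W$-Dirichlet form on $f^2$ plus an annular commutator with kernel $|\psi^2(x)-\psi^2(y)|\le 4(|x-y|\wedge 1)$ supported near $\pp B_{l-1}\cup\pp B_l$; this commutator residue can be absorbed into the local piece after bounding $\mu_W(f^2\1_{B_l\setminus B_{l-1}})$ by $(\Hup_{B_l}\e^W)\int f^2\,\d x$ and invoking Lemma \ref{L5.2}.

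For the local piece, apply Lemma \ref{L5.2} to $h:=(f\phi_l)\e^{-W/2}$, which under the local Lipschitz hypothesis in part (2) is compactly supported and locally Lipschitz (so that Lemma \ref{L5.2} is applicable, by approximation if needed) and satisfies $\int h^2\,\d x=\mu_W((f\phi_l)^2)$. The $L^1$-term becomes
\[
\bigg(\int|h|\,\d x\bigg)^{\!2}=\bigg(\int|f\phi_l|\e^{W/2}\,\mu_W(\d x)\bigg)^{\!2}\le\Big(\Hup_{|z|\le l+1}\e^{W(z)/2}\Big)^{\!2}\mu_W(|f|)^2,
\]
accounting for the factor $\Hup_{|z|\le l+1}\e^{W(z)/2}$ in $\bb(r)$. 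For the Dirichlet-form term I would expand
\[
|h^2(x)-h^2(y)|\le\e^{-W(y)}|(f\phi_l)^2(x)-(f\phi_l)^2(y)|+(f\phi_l)^2(x)|\e^{-W(x)}-\e^{-W(y)}|;
\]
the first summand, after converting $\e^{-W(y)}\d y\,\d x$ into $\d y\,\mu_W(\d x)$ and symmetrizing, is bounded by the target $\mu_W$-form of $f^2$, while the second, on $|x-y|\le 1$ with $x\in B_{l+1}$, is of order $|x-y|\,\e^{-W(x)}\,\Hup_{B_{l+2}}|\nn W|\cdot\Hup_{B_{l+2}}\e^{|\nn W|}$. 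This last estimate is precisely why the side-constraint $\Hup_{|z|\le l+2}\e^{2|\nn W(z)|}\le c_3/s$ appears in $\bb(r)$: it is needed so that the commutator error is absorbed into the $s$-coefficient of the Dirichlet form.

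Putting the two pieces together yields
\[
\mu_W(f^2)\le c\Big(s+\ff{1}{\kk_W(B_{l-1}^c)}\Big)\int\ff{|f^2(x)-f^2(y)|}{|x-y|^{n+\aa/2}}\,\d y\,\mu_W(\d x)+c'(s^{-2n/\aa}+s^{-n})\Hup_{|z|\le l+1}\e^{W(z)/2}\cdot\mu_W(|f|)^2.
\]
For (1) I fix $l$ once and for all, large enough that $\kk_W(B_{l-1}^c)$ is bounded below by a positive constant, and pick any convenient $s$; the $l$-dependent factors become finite constants $c_1,c_2$ and \eqref{e:the1} follows. For (2), given $r>0$ I choose $l=l(r)\to\infty$ so that $1/\kk_W(B_{l-1}^c)\le c_2(r\wedge 1)/2$ (possible by $\lim\kk_W(B_l^c)=\infty$) and then pick $s\le c_2(r\wedge 1)/2$ compatible with $\Hup_{B_{l+2}}\e^{2|\nn W|}\le c_3/s$, producing \eqref{e:the2} with the stated $\bb(r)$. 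The main technical obstacle I expect is the local piece: tracking the commutator $|\e^{-W(x)}-\e^{-W(y)}|$ generated by the $\e^{-W/2}$-reweighting and ensuring it is dominated by an $s$-multiple of the Dirichlet form rather than leaking into the $\mu_W(|f|)^2$ side, which is exactly what couples the admissible $s$ to the local Lipschitz constant of $W$.
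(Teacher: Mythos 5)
Your overall strategy is the same as the paper's: split $\mu_W(f^2)$ into a local piece and a far-field tail by a radial cutoff, estimate the local piece by applying Lemma~\ref{L5.2} to the reweighted function (here $(f\phi_l)\e^{-W/2}$, in the paper essentially $|f|\e^{-W/2}\psi^{1/2}$, which square to the same thing modulo $\phi_l^2$ vs.\ $\psi$), estimate the tail by the second characterization of $\kk_W$ in \eqref{e:ff001}, and track the cutoff and $\e^{-W}$ reweighting commutators. The identification of where $\Hup_{|z|\le l+1}\e^{W(z)/2}$ and the constraint on $\Hup_{|z|\le l+2}\e^{2|\nn W(z)|}$ come from is correct. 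So the route is essentially the paper's.

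There is, however, a concrete gap in part~(1), and it comes from fixing the cutoff width at $1$. Your cutoff $\phi_l$ transitions on $B_l\setminus B_{l-1}$ with $|\nn\phi_l|\le 2$, a bound that does not improve as $l$ grows. When you apply \eqref{e:ff001} to $(f(1-\phi_l))^2$ and expand, the annular commutator produces a term of the form
\[
\frac{C}{\kk_W(B_{l-1}^c)}\,\mu_W(f^2)
\]
with $C>0$ an absolute constant (coming from $\int\frac{|\psi^2(x)-\psi^2(y)|}{|x-y|^{n+\aa/2}}\,\d y=O(1)$, since $|\nn\psi|=O(1)$). This $\mu_W(f^2)$ contribution must be absorbed into the left-hand side, which requires $C/\kk_W(B_{l-1}^c)<1$. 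Since $l\mapsto\kk_W(B_l^c)$ is nondecreasing, under \eqref{e:ps11} it increases to some finite limit $\kk_0>0$, so $\frac{C}{\kk_W(B_{l-1}^c)}\ge\frac{C}{\kk_0}$ for \emph{all} $l$; if $\kk_0\le C$, absorption fails no matter how large you take $l$ or how small you take $s$. The paper avoids exactly this by using a cutoff $\psi_{l,k}$ with a \emph{tunable} transition width $k$, $|\nn\psi_{l,k}|\le 2/k$, so the annular commutator coefficient is $\frac{c_2}{k\,\kk_W(B_{l-k}^c)}$ (see \eqref{ee-00} and \eqref{NNK}); taking $l=2k$ and letting $k\to\infty$ makes this vanish even when $\kk_0$ is small. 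Your fixed width $1$ corresponds to the case $k=1$, which the paper only uses in part~(2), where $\kk_W(B_l^c)\to\infty$ rescues it. Replacing $\phi_l$ by a family $\phi_{l,k}$ with $|\nn\phi_{l,k}|\le 2/k$ and optimizing over both $l$ and $k$ (setting $l=2k$ in part~(1), $k=1$ in part~(2)) repairs the argument and brings it in line with the paper's proof. The proposed patch of ``absorbing the residue into the local piece'' does not get around this: $\mu_W(f^2\1_{B_l\setminus B_{l-1}})$ is still a piece of $\mu_W(f^2)$, not of $\mu_W((f\phi_l)^2)$ (on the annulus $\phi_l^2<1$), so passing it through Lemma~\ref{L5.2} leaves you with the same unabsorbable $\mu_W(f^2)$-coefficient.

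Two smaller remarks. First, applying Lemma~\ref{L5.2} to $(f\phi_l)\e^{-W/2}$ (or to $|f|\e^{-W/2}\psi^{1/2}$) requires a density/approximation argument since this function is not $C_c^\infty$; you flagged this, and the paper has the same implicit step. Second, your use of the $f\phi_l$/$f(1-\phi_l)$ split costs a harmless factor of $2$ compared with the paper's exact split $\mathbf{1}_{B_l}+(1-\mathbf{1}_{B_l})$ sandwiched between cutoffs; this is not a gap.
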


\begin{proof}For any $l> k\ge 1$, let $\psi_{l,k}\in C_c^1(\R^n)$ such that $\psi_{l,k}(x)=1$ for all $|x|\le l$, $\psi_{l,k}(x)=0$ for all $|x|>l+k$, and $|\nabla \psi_{l,k}|\le 2/k$ on $\R^n$. Then, according to Lemma \ref{L5.2}, for any $f\in C_c^\infty(\R^n)$ and $r>0$,
\begin{align*}&\int f^2(x)\e^{-W(x)}{\bf1}_{\{|x|\le l\}}\,dx\\
&\le \int f^2(x)\e^{-W(x)}\psi_{l,k}(x)\,\d x\\
&\le r \int_{\{|x-y|\le 1\}} \frac{ |f^2(x)\e^{-W(x)}\psi_{l,k}(x)-f^2(y)\e^{-W(y)}\psi_{l,k}(y)| }{|x-y|^{n+\aa/2}}\,\d x\, \d y\\
&\quad +c_1\left(r^{-2n/\aa}+r^{-n}\right) \left(\int |f|(x) \e^{-W(x)/2} \psi_{l,k}(x)^{1/2}\,\d x\right)^2\\
&\le r \int_{\{|x-y|\le 1\}} \frac{ |f^2(x)-f^2(y)| }{|x-y|^{n+\aa/2}}\, \d y\,\e^{-W(x)}\psi_{l,k}(x)\,\d x\\
&\quad +r \int \,f^2(x)\e^{-W(x)}\,\d x \int_{\{|x-y|\le 1\}} \frac{ |\psi_{l,k}(x)-\psi_{l,k}(y)| }{|x-y|^{n+\aa/2}}\, \d y\\
&\quad+ r \int \,f^2(x)\e^{-W(x)}\,\d x \int_{\{|x-y|\le 1\}} \frac{ |1-\e^{W(x)-W(y)}| }{|x-y|^{n+\aa/2}}\, \psi_{l,k}(y)\, \d y\\
&\quad+c_1\left(r^{-2n/\aa}+r^{-n}\right)\left(\Hup_{|z|\le l+k}\e^{W(z)/2}\right) \left(\int |f|(x) \e^{-W(x)} \,\d x\right)^2\\
&\le r \int_{\{|x-y|\le 1\}} \frac{ |f^2(x)-f^2(y)| }{|x-y|^{n+\aa/2}}\, \d y\,\e^{-W(x)}\,\d x\\
&\quad +\frac{c_2r}{k} \left(\int \,f^2(x)\e^{-W(x)}\,\d x\right) + c_3r \left(\Hup_{|z|\le l+k+1} \e^{2|\nabla W(z)|}\right)\left(\int \,f^2(x)\e^{-W(x)}\d x\right) \\
&\quad+c_1\left(r^{-2n/\aa}+r^{-n}\right)\left(\Hup_{|z|\le l+k}\e^{W(z)/2}\right) \left(\int |f|(x) \e^{-W(x)} \,\d x\right)^2, \end{align*}
where in the last inequality we have used the facts that  for all $x\in \R^d$,
\begin{equation}\label{ee-00}\int_{\{|x-y|\le 1\}} \frac{ |\psi_{l,k}(x)-\psi_{l,k}(y)| }{|x-y|^{n+\aa/2}}\, \d y\le  \big(\Hup_{z\in \R^n} |\nabla\psi_{l,k}|(z)\big)\int_{\{|x-y|\le 1\}} \frac{1 }{|x-y|^{n-1+\aa/2}}\, \d y\le \frac{c_2}{k}\end{equation} and, by the elementary inequality $|\e^r-1|\le \e^{|r|}|r|\le \e^{2|r|}$ for all $r\in \R$,
\begin{align*}&\int_{\{|x-y|\le 1\}} \frac{ |1-\e^{W(x)-W(y)}| }{|x-y|^{n+\aa/2}} \psi_{l,k}(y) \,\d y\\
&\le \Hup_{|x|\le l+k+1}\int_{\{|x-y|\le 1, |y|\le l+k\}} \frac{ |1-\e^{W(x)-W(y)}| }{|x-y|^{n+\aa/2}}\, \d y \\
&\le  \Hup_{|x|\le l+k+1}\int_{\{|x-y|\le 1, |y|\le l+k\}} \frac{ \e^{|W(x)-W(y)|}|W(x)-W(y)| }{|x-y|^{n+\aa/2}}\, \d y \\
&\le \Big(\Hup_{|z|\le l+k+1} \e^{|\nabla W(z)|}|\nabla W(z)|\Big)\int_{\{|x-y|\le 1\}} \frac{1 }{|x-y|^{n-1+\aa/2}}\, \d y\\
&\le c_3 \Big(\Hup_{|z|\le l+k+1} \e^{2|\nabla W(z)|}\Big).\end{align*}

On the other hand, by the definition of $\kappa_W(B_{l-k}^c)$, we have \begin{align*}&\int f^2(x)\e^{-W(x)}\left(1-{\bf1}_{\{|x|\le l\}}\right)\,dx\\
&\le \int f^2(x)\e^{-W(x)}(1-\psi_{l-k,k}(x))\,\d x\\
&\le \frac{1}{\kappa_W(B_{l-k}^c)} \int\frac{ |f^2(x)(1-\psi_{l-k,k}(x))-  f^2(y)(1-\psi_{l-k,k}(y))| }{|x-y|^{n+\aa/2}}\,\d y \,\e^{-W(x)}\,\d x\\
&\le  \frac{1}{\kappa_W(B_{l-k}^c)} \int\frac{ |f^2(x)-  f^2(y)| }{|x-y|^{n+\aa/2}}\,\d y \,\e^{-W(x)}\,\d x\\
&\quad + \frac{1}{\kappa_W(B_{l-k}^c)} \int\frac{ |f^2(x)\psi_{l-k,k}(x)-  f^2(y)\psi_{l-k,k}(y)| }{|x-y|^{n+\aa/2}}\,\d y \,\e^{-W(x)}\,\d x\\
&\le  \frac{2}{\kappa_W(B_{l-k}^c)} \int\frac{ |f^2(x)-  f^2(y)| }{|x-y|^{n+\aa/2}}\,\d y\, \e^{-W(x)}\,\d x\\
&\quad + \frac{1}{\kappa_W(B_{l-k}^c)} \int f^2(x)\e^{-W(x)}\,\d x \int\frac{ |\psi_{l-k,k}(x)-  \psi_{l-k,k}(y)| }{|x-y|^{n+\aa/2}}\,\d y\\
&\le  \frac{2}{\kappa_W(B_{l-k}^c)} \int\frac{ |f^2(x)-  f^2(y)| }{|x-y|^{n+\aa/2}}\,\d y\, \e^{-W(x)}\,\d x\\
&\quad + \frac{c_2}{k\,\kappa_W(B_{l-k}^c)} \int f^2(x)\e^{-W(x)}\,\d x, \end{align*} where the last inequality follows from \eqref{ee-00} again.

Combining both inequalities above, we have
\begin{equation}\label{NNK}\begin{split}\mu_W(f^2)\le &\left(r+\frac{2}{\kappa_W(B_{l-k}^c)} \right)\int_{\R^n\times \R^n} \ff{|f^2(x)-f^2(y)|}{|x-y|^{n+\alpha/2}}\,\d y\,\e^{-W(x)}\,\d x\\
&+ c_1\left(r^{-2n/\aa}+r^{-n}\right)\left(\Hup_{|z|\le l+k}\e^{W(z)/2}\right) \mu_W(|f|)^2\\
&+\left(\Big(\frac{ c_2}{k}+ c_3 \Hup_{|z|\le l+k+1} \e^{2|\nabla W(z)|}\Big) r  + \frac{c_2}{k\,\kappa_W(B_{l-k}^c)}\right)\mu_W(f^2). \end{split}\end{equation}

(1) Taking $l=2k$, we have
\begin{align*}&\Big(\frac{ c_2}{k}+ c_3 \Hup_{|z|\le l+k+1} \e^{2|\nabla W(z)|}\Big) r  + \frac{c_2}{k\,\kappa_W(B_{l-k}^c)}\\
&= \Big(\frac{ c_2}{k}+ c_3 \Hup_{|z|\le 3k+1} \e^{2|\nabla W(z)|}\Big) r  + \frac{c_2}{k\,\kappa_W(B_{k}^c)}.\end{align*}
Since $\kappa_W(B_{l}^c)$ is increasing with respect to $l$,  under \eqref{e:ps11} we can choose $k\ge 1$ large enough and then take $r>0$ small enough such that
$$ \Big(\frac{ c_2}{k}+ c_3 \Hup_{|z|\le 3k+1} \e^{2|\nabla W(z)|}\Big) r  + \frac{c_2}{k\,\kappa_W(B_{k}^c)}\le \frac{1}{2}.$$
This along with \eqref{NNK} yields \eqref{e:the1}.

(2) Taking $k=1$ in \eqref{NNK} and using \eqref{e:ps12}, we know that \eqref{e:the2} holds with
\begin{align*}\beta(r)=\inf\bigg\{&2c_1\left(s^{-2n/\aa}+s^{-n}\right)\left(\Hup_{|z|\le l+1}\e^{W(z)/2}\right):s+\frac{2}{\kappa_W(B_{l-1}^c)} \le \frac{r}{2}\\
& \quad \text{ and } \Big( c_2+ c_3 \Hup_{|z|\le l+2} \e^{2|\nabla W(z)|}\Big) s  + \frac{c_2}{\kappa_W(B_{l-1}^c)}\le \ff 1 2  \bigg\}.\end{align*} Note that
\begin{align*}\beta(r)\le \inf\bigg\{&2c_1\left(s^{-2n/\aa}+s^{-n}\right)\left(\Hup_{|z|\le l+1}\e^{W(z)/2}\right):s+\frac{1}{\kappa_W(B_{l-1}^c)} \le c_4({r}\wedge1)\\
& \qquad\qquad\qquad\qquad\qquad\qquad \qquad\qquad\qquad  \text{ and } \Hup_{|z|\le l+2} \e^{2|\nabla W(z)|} s \le c_5  \bigg\}.\end{align*} Then, we prove \eqref{e:the2} with the desired $\beta$.
\end{proof}

\begin{lem}\label{L:2} For any  $\aa\in (0,2)$, there is a constant $c_0>0$ such that   $$\kappa_W(B_{l}^c)\ge c_0\inf_{|x|\ge l} \frac{\e^{W(x)}}{|x|^{n+\aa/2}},\ \ l\ge 1.$$
 \end{lem}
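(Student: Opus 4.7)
The plan is to leverage the second formulation of $\kappa_W$ in \eqref{e:ff001}, which presents $\kappa_W(B_l^c)$ as the infimum, over nonnegative $f$ with $f|_{B_l}=0$ and $\mu_W(f) > 0$, of
$$\mathcal R(f) := \frac{1}{\mu_W(f)} \int\!\!\int \frac{|f(x)-f(y)|}{|x-y|^{n+\aa/2}} \,\d y\,\mu_W(\d x).$$
I will prove $\mathcal R(f) \ge c_0 \Phi(l)$ directly for every such $f$, where $\Phi(l) := \inf_{|z|\ge l} \e^{W(z)}/|z|^{n+\aa/2}$ and $c_0 := \mu_W(B_1)/2^{n+\aa/2}$; the latter is a positive constant (depending on $W$, $n$, $\aa$) since $\mu_W$ is an absolutely continuous probability measure.

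The key step is a symmetrization. Because $|f(x)-f(y)|/|x-y|^{n+\aa/2}$ is symmetric under $x\leftrightarrow y$, renaming variables gives
$$\int\!\!\int \frac{|f(x)-f(y)|}{|x-y|^{n+\aa/2}} \,\d y\,\mu_W(\d x) = \int\!\!\int \frac{|f(x)-f(y)|}{|x-y|^{n+\aa/2}}\,\d x \,\mu_W(\d y),$$
which places the weight $\mu_W$ on the variable I will confine to $B_l$. Since $f$ vanishes on $B_l$, restricting the $y$-integration to $B_l$ and using $f|_{B_l}=0$ to reduce $x$ to $B_l^c$ yields the lower bound $\int_{B_l}\mu_W(\d y)\int_{B_l^c}f(x)|x-y|^{-(n+\aa/2)}\,\d x$. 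For $|x|\ge l$ and $|y|\le l$, the hypothesis $l\ge 1$ together with $|x-y|\le |x|+l\le 2|x|$ then produces
$$\int\!\!\int \frac{|f(x)-f(y)|}{|x-y|^{n+\aa/2}} \,\d y\,\mu_W(\d x) \;\ge\; \frac{\mu_W(B_l)}{2^{n+\aa/2}} \int_{B_l^c} \frac{f(x)}{|x|^{n+\aa/2}}\,\d x.$$

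To close, the definition of $\Phi$ rewrites as $|x|^{-(n+\aa/2)} \ge \Phi(l) \e^{-W(x)}$ for $|x|\ge l$, so the remaining integral is bounded below by $\Phi(l)\mu_W(f)$. Combining with $\mu_W(B_l) \ge \mu_W(B_1)$ (valid since $l\ge 1$) gives $\mathcal R(f) \ge c_0\Phi(l)$, and hence $\kappa_W(B_l^c)\ge c_0\Phi(l)$ upon taking the infimum over $f$. I do not expect any substantive obstacle here; the decisive trick is the asymmetric placement of the weights, putting $\mu_W$ on the bounded variable $y$ (which gives the harmless factor $\mu_W(B_l)$) so that the definition of $\Phi$ may be used to recover the needed $\e^{-W(x)}$ from the geometric factor $|x|^{-(n+\aa/2)}$ on the unbounded variable $x$.
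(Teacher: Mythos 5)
Your proof is correct. Let me compare it with the paper's argument, since the route is similar in spirit but differs in a few technically interesting ways.

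The paper works directly with the set formulation in the first line of \eqref{e:ff001}: writing the integrand as $\frac{1}{\mu_W(A)}\int_A\mu_W(\d x)\int_{A^c}\frac{\e^{W(x)-W(y)}+1}{|x-y|^{n+\aa/2}}\,\d y$, pulling the inner infimum over $x\in A$ inside the $\mu_W$-average, then restricting the $y$-integral from $A^c\supset B_l$ to $B_1$ and using $|x-y|\le 2|x|$. You instead start from the functional formulation, perform a symmetrization $\int\int K\,\d y\,\mu_W(\d x)=\int\int K\,\d x\,\mu_W(\d y)$ (valid because $K(x,y)=|f(x)-f(y)|/|x-y|^{n+\aa/2}$ is symmetric and the swap is a relabeling), and then place the $\mu_W$-weight on the bounded variable $y\in B_l$. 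The core steps — confining one variable to a bounded ball, the elementary bound $|x-y|\le 2|x|$, and reading off $|x|^{-(n+\aa/2)}\ge\Phi(l)\e^{-W(x)}$ from the definition of $\Phi$ — are the same.

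One small advantage of your route: by keeping $\e^{-W(y)}$ inside the measure $\mu_W(\d y)$ you produce the factor $\mu_W(B_l)\ge\mu_W(B_1)>0$ directly. The paper's displayed chain, as literally written, pulls $\e^{-W(y)}$ out as a pointwise constant on $B_1$, which would tacitly require $W$ to be bounded above on $B_1$; your version sidesteps this because only the integrability of $\e^{-W}$ (i.e.\ that $\mu_W$ is a finite measure) is used. In practice this is a cosmetic improvement for the paper's applications, but your argument is cleaner on that point. The symmetrization also absorbs the $\e^{W(x)}+\e^{W(y)}$ weight without explicit manipulation. Both proofs land on the same final constant up to an inessential factor.
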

\begin{proof}According to the definition of $\kappa_W(B_{l}^c)$, we have
\begin{align*}\kappa_W(B_{l}^c)\ge &\frac{1}{2}\inf_{A\Subset B_l^c} \inf_{x\in A}\int_{A^c} \frac{\e^{W(x)-W(y)}+1}{|x-y|^{n+\aa/2}}\,\d y\ge \frac{1}{2}\inf_{|x|\ge l}\int_{\{|y|< l\}} \frac{\e^{W(x)-W(y)}+1}{|x-y|^{n+\aa/2}}\,\d y\\
\ge & c_1\inf_{|x|\ge l} \e^{W(x)}\int_{\{|y|\le 1\}} \frac{1}{|x-y|^{n+\aa/2}}\,\d y\ge c_2\inf_{|x|\ge l} \frac{\e^{W(x)}}{|x|^{n+\aa/2}}.\end{align*} This proves the desired assertion.\end{proof}

Theorem \ref{T:thm} is a direct consequence of Lemmas \ref{L:1} and \ref{L:2}, and so we omit the proof here.

The example below indicates that Theorem \ref{T:thm} is sharp in some sense.
\begin{exa}Let $W(x) = \ff 1 2 (n + \varepsilon) \log(1 + |x|^2) +c_{n,\vv}$ for $\vv>0$.
\begin{itemize}
\item[$(1)$]\eqref{e:the1} holds  if and only if $\vv\ge \aa/2$.
\item[$(2)$] \eqref{e:the2} holds if and only if $\vv>\aa/2$. Furthermore, when $\vv>\aa/2$, \eqref{e:the2} holds with
$$\bb(r)=c_1(1\wedge r)^{-\frac{2n}{\aa}-\frac{n+\vv}{2\vv-\aa}}.$$ \end{itemize}
\end{exa}
\begin{proof}The sufficiency for both conclusions is easily seen from Theorem \ref{T:thm}. To verify the necessary, we will make use of the  reference functions used in \cite[Corollary 1.1]{WW}. For any $l\ge 1$, define $g_l\in C^\infty(\R^n)$ such that $|\nn g_l|\le 2/l$ and
$$g_l(x)\beg{cases} =0, &\text{if}\ |x|\le l, \\
\in [0,1], &\text{if} \ |x|\in [l, 2l],\\
=1, &\text{if}\ |x|\ge 2l.\end{cases}$$
Then there exists a constant $c_0>0$ independent of $l$ such that for all $x\in \R^n$ and $l\ge 1$,
\beq\label{NN}\beg{split}  \int_{\R^n} \ff{|g_l^2(y)-g_l^2(x)|}{|x-y|^{n+\aa/2}}\, \d y &\le 2\int_{\R^n} \ff{|g_l(y)-g_l(x)|}{|x-y|^{n+\aa/2}}\, \d y\\
&\le \ff 4{l}\int_{\{|x-y|\le l\}} \ff 1 {|y-x|^{n+\aa/2-1}}\,\d y+2\int_{\{|x-y|\ge l\}} \ff 1 {|x-y|^{n+\aa/2}}\,\d y\\
&\le \ff {c_0} {l^{\aa/2}}.\end{split}\end{equation}
Obviously,
\beq\label{NNDD}\mu_W(g^2_l) \ge \ff{c_1}{l^\vv},\ \  \mu_W(g_l)^2\le \ff{c_2}{l^{2\vv}},\quad l\ge 1\end{equation} hold for some constants $c_1,c_2>0$. Note that, since $1-g_l\in C_c^\infty(\R^n)$, we can directly apply $g_l$ into \eqref{e:the1} and \eqref{e:the2}.

(1) Combining (\ref{NN}) with \eqref{NNDD}, we see that for any $c>0$,
$$\lim_{l\to\infty} \ff{ 1}{\mu_W(g^2_l)-c\mu_W(g_l)^2}\int \ff{|g_l^2(y)-g_l^2(x)|}{|x-y|^{n+\aa/2}}\, \d y\,\mu_W(\d x) \le \lim_{l\to\infty} \ff{c_0l^{-\alpha/2}}{c_1 l^{-\vv}- cc_2 l^{-2\vv}}=0$$
provided $\vv\in (0,\aa/2).$ Thus, the inequality \eqref{e:the1} does not hold.

 (2) We first prove that if $\vv\le\aa/2$, then for any $\bb: (0,\infty)\to (0,\infty)$ the inequality (\ref{e:the2}) does not hold. Indeed, if this inequality holds, then, by \eqref{NN} and \eqref{NNDD},
$$\ff{c_1}{l^\vv}\le \ff{c_0r}{l^{\aa/2}} + \ff{c_2\bb(r)}{l^{2\vv}},\quad r>0, l\ge 1$$ holds for some constants $c_0,c_1,c_2>0$.
Since $\vv \in (0, \aa/2]$, we obtain
$$c_1\le \lim_{l\to\infty} \ff{c_0r}{l^{(\aa/2)-\vv}} +\lim_{l\to\infty} \ff{c_2\bb(r)}{l^\vv} \le c_0r,\quad  r>0.$$ Letting $r\to 0$ we conclude that $c_1\le 0$, which is however impossible.
Furthermore, by   Theorem \ref{T:thm}(2), it is easy to prove \eqref{e:the2} with the desired rate function $\bb(r)$.\end{proof}

\paragraph{Acknowledgement.} Supported in
part by  NNSFC (11431014,11522106,11626245,11626250), the Fok Ying Tung
Education Foundation (151002),
National Science Foundation of
Fujian Province (2015J01003) and the Program for Probability and Statistics: Theory and Application (No.\ IRTL1704). The authors would like to thank Professor Takashi Kumagai for helpful comments.

\end{document}